\title[Irreducibility of Generalized Permutohedra]{Irreducibility of Generalized Permutohedra, Supermodular Functions, and Balanced Multisets}
\author{Milan Haiman}
\author{Yuan Yao}
\begin{document}
% \vspace{-0.2in}

\maketitle

% \vspace{-0.2in}
\begin{abstract}
    We study generalized permutohedra and supermodular functions. Specifically we analyze decomposability and irreducibility for these objects and establish some asymptotic behavior. We also study a related problem on irreducibility for multisets.
\end{abstract}

\section{Introduction}\label{sec:introduction}

A \emph{permutohedron} in $\R^n$ is the $(n-1)$-dimensional polytope obtain by taking the convex hull of all $n!$ points obtained by permuting the coordinates a point $(x_1, \dots, x_n)\in \R^n$. In \cite{Postnikov}, Postnikov introduced the \emph{generalized permutohedron}, which is a deformation of a permutohedron obtained by translating the hyperplanes bounding each face. Generalized permutohedra were also further studied in \cite{PRW}. In \cite{Postnikov}, Postnikov derives a volume formula for generalized permutohedra as a polynomial in the defining parameters. The proof of the formula requires a decomposition of some generalized permutohedra into weighted Minkowski sums of coordinate simplices. However, not all generalized permutohedra can be written in this form. Therefore, it is natural to ask whether all generalized permutohedra can be written as a weighted Minkowski sum of a some fixed set of polytopes.

We define a generalized permutohedron to be \emph{irreducible} if it cannot be written as a weighted Minkowski sum of generalized permutohedra in a nontrivial way (any convex polytope is a Minkowski sum of smaller copies of itself). Then all generalized permutohedra can be decomposed as a Minkowski sum of irreducible generalized permutohedra.

In this paper we aim to understand the class of irreducible generalized permutohedra. We make use of connections to related problems.

Generalized permutohedra in $\R^n$ are strongly related to supermodular functions on subsets of $[n]$, which are a discrete analog of convex functions. Supermodular functions are an important object in optimization and other fields (see \cite{Fujishige}). 

There is a direct bijection between irreducible generalized permutohedra and irreducible supermodular functions, which have been studied by several authors (\cite{Submod,PY}). In \cite{PY}, Promislow and Young determined the irreducible supermodular functions for $n\le 4$ and conjectured a simple characterizations for $n>4$. However, this conjecture was shown to be false by {\v{Z}}ivn{\'{y}}, Cohen, and Jeavons in \cite{Submod}, and we further show that this characterization is far from capturing all irreducible supermodular functions.

To understand irreducible supermodular functions, we first study a related (and simpler) problem involving irreducibility. Given a multiset $\M$ of subsets of $[n]$, we say that $\M$ is \emph{balanced} if each element of $[n]$ appears the same number of times in $\M$. We denote the number of times each element appears by the \emph{complexity} $m=m(\M)$. The conditions for irreducibility generalized permutohedra and supermodular functions can be formulated as a modification of that of irreducible balanced multisets.

We derive bounds on the complexity of irreducible balanced multisets and enumerate the number of irreducible balanced multisets up to lower order terms. Using similar ideas, we provide double-exponential upper bounds for complexity and number of irreducible generalized permutohedra. We also obtain double-exponential lower bounds by relating a subclass of supermodular functions to matroids. The key asymptotic results are the following.

\begin{theorem}\label{thm:supermodular-asymptotics}
    The number of irreducible supermodular functions, up to equivalence, is bounded above by $2^{O(n2^n)}$ and bounded below by $2^{\Omega(2^n/n^{3/2})}$.
\end{theorem}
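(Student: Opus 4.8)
The plan is to handle the two bounds separately, both through the standard correspondence between equivalence classes of supermodular functions on $2^{[n]}$ and points of a pointed polyhedral cone $\mathcal{C}$: inside $\R^{2^{[n]}}$ the supermodular functions form a cone whose lineality space is exactly the $(n+1)$-dimensional subspace of modular (affine-linear) functions, and $\mathcal{C}$ is the quotient, of dimension $d := 2^n - n - 1$. A supermodular function $f$ is irreducible precisely when its class spans an extreme ray of $\mathcal{C}$: if $\bar f$ is extreme and $f = f_1 + f_2$ with both $f_i$ supermodular, then $\bar f_1,\bar f_2 \in \R_{\geq 0}\bar f$ and the splitting is trivial; conversely, if $\bar f$ does not span an extreme ray there is $g \in \mathcal{C}$, not a multiple of $\bar f$, with $\bar f - \epsilon g \in \mathcal{C}$ for small $\epsilon > 0$, so $\bar f = (\bar f - \epsilon g) + \epsilon g$ splits nontrivially. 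Hence Theorem~\ref{thm:supermodular-asymptotics} reduces to bounding the number of extreme rays of $\mathcal{C}$. Note as well that negation identifies $\mathcal{C}$ with the analogous submodular cone, so this count is the same for submodular functions, and by the bijection recalled in the introduction also for generalized permutohedra.

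For the upper bound I would use that $\mathcal{C}$ is carved out by the $N := \binom{n}{2}2^{n-2}$ elementary inequalities $f(S\cup i)+f(S\cup j)\leq f(S)+f(S\cup ij)$, so $\mathcal{C}$ has at most $N = O(n^2 2^n)$ facets. A pointed polyhedral cone in $\R^d$ with $N$ facets has at most $\binom{N}{d-1}$ extreme rays: each extreme ray lies on at least $d-1$ facets with linearly independent normals, and the common zero locus of $d-1$ such linear forms is a line meeting the pointed cone in precisely that ray, so the map from extreme rays to $(d-1)$-subsets of facets is injective. Therefore the number of irreducible supermodular functions up to equivalence is at most $\binom{N}{d-1}\leq N^{d-1}$, and $\log_2 (N^{d-1}) \leq 2^n \log_2(\binom{n}{2}2^{n-2}) = O(n2^n)$, giving $2^{O(n2^n)}$.

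For the lower bound I would pass to matroids. For a matroid $M$ on $[n]$ the rank function $r_M$ is submodular, and the crucial claim is that $r_M$ is irreducible whenever $M$ is connected, i.e. the base polytope of a connected matroid is Minkowski-indecomposable. I would prove this by analyzing a hypothetical decomposition $r_M = z_1 + z_2$: the vertices of the base polytope are the indicator vectors of bases, its edges correspond to single basis exchanges and point along the directions $e_i - e_j$, and under a Minkowski decomposition each edge splits as a sum of parallel (possibly degenerate) edges of the two summands; connectedness of $M$ makes the basis-exchange graph connected, which lets one propagate the ratio of edge lengths and conclude that $z_1$ and $z_2$ are homothetic to $r_M$. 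Granting this, it remains to count connected matroids. Distinct loopless matroids of the same rank on $[n]$ give inequivalent submodular functions (comparing values at $\emptyset$, at singletons, and at $[n]$ forces the scalar to be $1$ and the modular correction to vanish), and by Knuth's classical construction there are $2^{\Omega(2^n/n^{3/2})}$ sparse paving matroids of rank $\lfloor n/2\rfloor$ on $[n]$; for large $n$ these are loopless, and all but a vanishing fraction are connected (a disconnected sparse paving matroid of that rank with a circuit-hyperplane must split off a coloop, a situation one can exclude without changing the count). Restricting to the connected ones still yields $2^{\Omega(2^n/n^{3/2})}$ inequivalent irreducible supermodular functions.

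The upper bound is essentially routine: a facet count plus the standard estimate for the number of extreme rays of a cone. The main obstacle is the irreducibility of connected matroids — one has to exclude \emph{every} decomposition $r_M = z_1 + z_2$, and the sketched argument requires genuinely global control of the edge vectors of the summands along the basis-exchange graph, not merely local information at a single exchange. A secondary point to check is that the matroids furnished by Knuth's construction can be taken connected, loopless, and pairwise inequivalent while keeping the count at $2^{\Omega(2^n/n^{3/2})}$.
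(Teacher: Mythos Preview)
Your upper bound is correct and is exactly the paper's argument: bound the number of extreme rays of the $(2^n-n-1)$-dimensional cone cut out by the $\binom{n}{2}2^{n-2}$ elementary supermodular inequalities via $\binom{N}{d-1}$.

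For the lower bound, your overall strategy also coincides with the paper's---reduce to counting connected (the paper says ``irreducible'') matroids on $[n]$, then combine Knuth's lower bound with the fact that almost all matroids are connected. The divergence is in the proof of the key lemma that a connected matroid yields an irreducible supermodular function. The paper does not argue through Minkowski geometry of the base polytope. Instead it calls a supermodular $f$ \emph{simple} when each partial derivative $\partial_i f$ is an irreducible (or constant) nondecreasing function, shows via the Gelfand--Goresky--MacPherson--Serganova theorem that simple functions biject with loopless matroids, and then proves in two lines that if a simple $f$ splits as $g_1+g_2$ then for each $i$ one of $\partial_i g_1,\partial_i g_2$ must vanish, forcing the decomposition to respect a partition of the ground set.

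Your edge-propagation sketch, by contrast, has a genuine imprecision you should be aware of: the basis-exchange graph (the $1$-skeleton of the base polytope) is connected for \emph{every} matroid, connected or not, so its connectedness cannot be what separates the decomposable cases from the indecomposable ones. Propagating edge-length ratios in a Minkowski decomposition really requires information from the $2$-faces---a triangular $2$-face forces all three of its edge ratios to agree, while a square $2$-face only equates ratios of opposite edges---and it is at this level that matroid connectedness has to enter. The paper's derivative argument sidesteps this polytope geometry entirely and is much shorter; your route can be completed, but ``connectedness of $M$ makes the basis-exchange graph connected, which lets one propagate'' is not yet a proof, and you have correctly flagged this as the main obstacle.
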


We also study a simple subclass of irreducible supermodular functions, which we enumerate precisely.

The paper is structured as follows. In \cref{sec:prelimaries} we establish the preliminary definitions related to generalized permutohedra and supermodular functions. In \cref{sec:supermodular-analysis} we establish some conditions for a supermodular function to be irreducible. In \cref{sec:balanced} we explore the related problem of irreducible balanced multisets. In \cref{sec:upper-bounds} we obtain upper bounds on the number and complexity of irreducible supermodular functions. In \cref{sec:lower-bounds} we obtain lower bounds on the number of irreducible supermodular functions. In \cref{sec:two-layers}, we study supermodular functions with supermodularities on only two layers.

\section{Preliminaries}\label{sec:prelimaries}

There are several equivalent ways to define a generalized permutohedron, and here we present the one that is the most convenient for our purposes. For a subset $I\subseteq[n]$, we let $1_I$ denote the vector whose $i$-th coordinate is $1$ if $i\in I$ and $0$ otherwise.

\begin{definition}
A \emph{generalized permutohedron} in $\R^n$ is a polytope of the form $$\{x\in \R^n \colon x\cdot 1_I\ge z_I, x\cdot 1_{[n]}=z_{[n]}\}$$ for reals $z_I$ satisfying the \emph{supermodularity condition}: $$z_{I\cap J}+z_{I\cup J}\ge z_I+z_J$$ for all $I,J\subseteq[n]$ (we set $z_\varnothing=0$).
\end{definition}

This definition includes all ordinary permutohedra: for $x_1\le x_2\le \dots \le x_n$, we can recover the permutohedron with vertices that are permutations of $(x_1,\dots,x_n)$ by taking $z_I=x_1+\dots+x_{\abs{I}}$ for each $I\subseteq [n]$.

The supermodularity conditions $z_{I\cap J}+z_{I\cup J}\ge z_I+z_J$ guarantees that a generalized permutohedron has the same face structure as a permutohedron, up to degeneracies that reduce the dimension of faces. In particular, we still have that all edges are parallel to $e_i-e_j$ for some $i,j$.

We define the \emph{Minkowski sum} of two subsets $P$ and $Q$ of $\R^n$ to be the set $P+Q=\{x+y\colon x\in P, y\in Q\}$. Note that the set of generalized permutohedra is closed under Minkowski sums, since taking a Minkowski sum simply results in adding the corresponding $z_I$ parameters.

\begin{definition}
We say that a generalized permutohedron $P$ is \emph{irreducible} if whenever $P$ is written as a Minkowski sum $Q_1+Q_2$ of generalized permutohedra, $Q_1$ and $Q_2$ are both copies of $P$ up to scaling and translation.
\end{definition}

We can view the set of generalized permutohedra as a subset of $\R^{2^n-1}$ by considering the vector of corresponding $z_I$ parameters. This subset is a cone bounded by the hyperplanes corresponding to the supermodularity conditions. Then the irreducible generalized permutohedra correspond to the extreme rays of this cone. In particular, because we have finitely many conditions, there are finitely many irreducible generalized permutohedra, up to scaling and translation. So every generalized permutohedron can be written as a weighted Minkowski sum of irreducible generalized permutohedra. For example, a permutohedron is a weighted Minkowski sum of $\binom{n}{2}$ line segments between the standard basis vectors in $\R^n$.

Since the problem of determining the irreducible generalized permutohedra is equivalent to determining the extreme rays of a high-dimensional cone, we can apply standard algorithms to determine the answer for small $n$. For $n=3$ there are $5$ irreducible supermodular functions. As generalized permutohedra, two of these are equilateral triangles (with opposite orientations), and the other three are are line segments.

For $n=4$ there are $37$ irreducible supermodular functions. Ignoring lower dimensional examples and symmetries, we have $5$ new irreducible generalized permutohedra, pictured below.

\begin{center}
\includegraphics[width=2in]{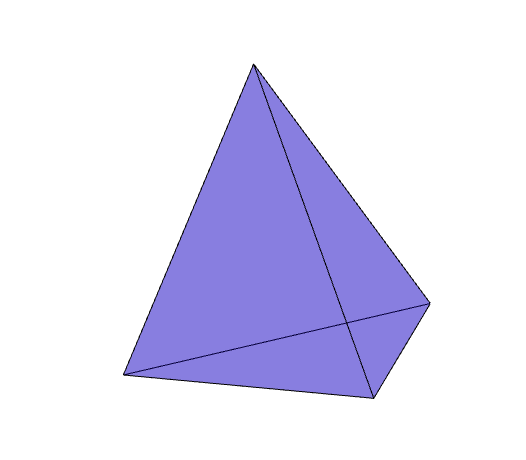}
\includegraphics[width=2in]{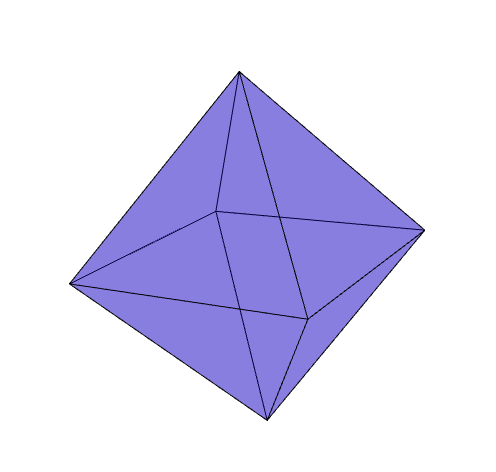}
\includegraphics[width=2in]{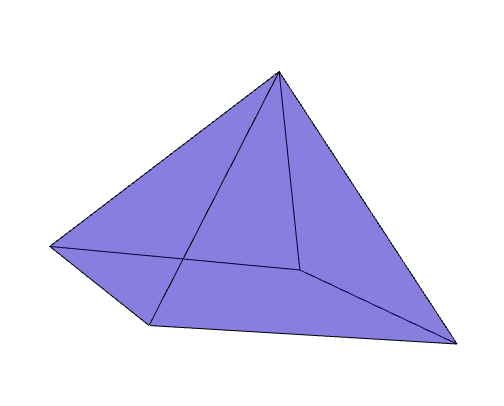}
\includegraphics[width=1.6in]{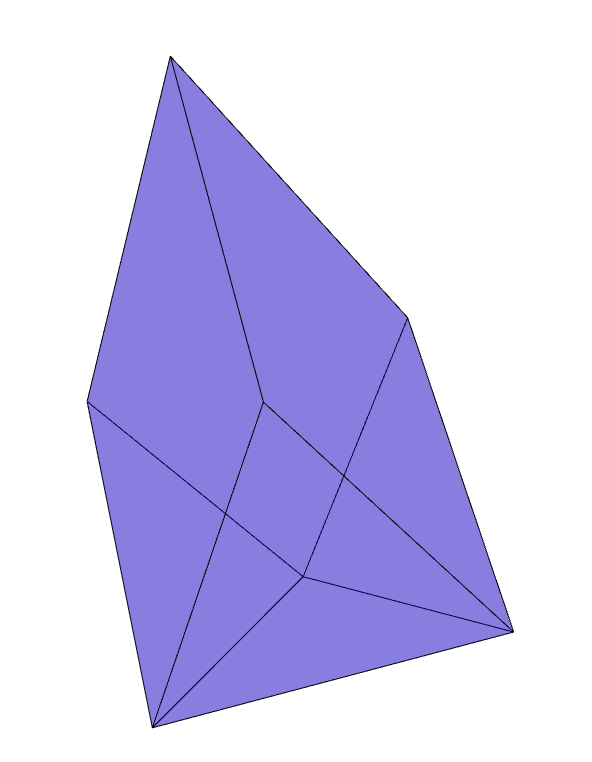}
\includegraphics[width=2in]{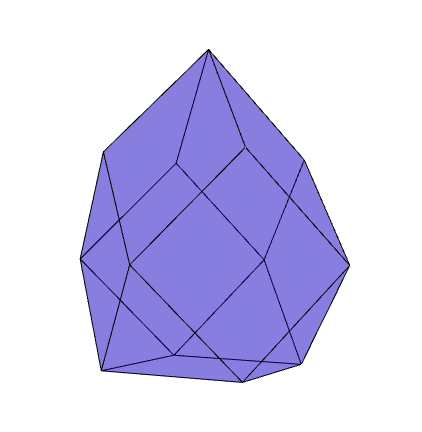}
\end{center}

For $n=5$ there are $117978$ irreducible generalized permutohedra. Even accounting for lower dimensional examples and symmetries, we have many new polytopes that do not follow a clear pattern. Thus we aim to understand the number and complexity of irreducible generalized permutohedra for general $n$ instead of a precise characterization.

\begin{definition}
A function $f: 2^{[n]} \to \R$ is \emph{supermodular} if $f(I\cap J)+f(I\cup J)\ge f(I)+f(J)$ for all $I,J\subseteq[n]$. It is \emph{modular} if $f(I\cap J)+f(I\cup J)= f(I)+f(J)$ for all $I,J\subseteq[n]$.
\end{definition}

Given a generalized permutohedron, we immediately obtain a supermodular function by taking $f(I)=z_I$. In the other direction, any supermodular function $f$ with $f(\emptyset)=0$ gives a generalized permutohedron. Thus generalized permutohedra and supermodular functions are essentially the same object. In particular, note that modular functions correspond to a single point as a generalized permutohedron.

\begin{definition}
We say that two supermodular functions are \emph{equivalent} if they differ by a modular function. We say that a supermodular function $f$ is \emph{irreducible} if it is not modular and whenever $f=g_1+g_2$ for supermodular functions $g_1$ and $g_2$, $g_1$ and $g_2$ are each equivalent to a function of the form $cf$ for $c\in \R_{\ge 0}$.
\end{definition}

Note that each equivalence class of irreducible supermodular functions has a representative $f$ with the following properties:
\begin{itemize}
    \item $f(I)=0$ for $\abs{I}\le 1$
    \item $f$ takes nonnegative integer values with greatest common divisor $1$.
\end{itemize}

As before, we see that the irreducible supermodular functions generate all supermodular functions by taking nonnegative linear combinations. Additionally, irreducible generalized permutohedra correspond to irreducible supermodular functions.

% Explain how the supermodularity value is also an edge length of the polytope?

\section{Analyzing Irreducible Supermodular Functions}\label{sec:supermodular-analysis}

As a motivating example, we first consider \emph{nondecreasing} functions on subsets of $[n]$, that is, functions $f\colon 2^{[n]}\to \R$ where $f(I)\ge f(J)$ whenever $I\supseteq J$.

For a set $S$ and $i\in S$, let $\partial_i$ be the \emph{discrete derivative operator} mappings functions $f\colon 2^S\to \R$ to functions $\partial_if\colon 2^{S\setminus\{i\}}\to \R$, defined by $(\partial_i f)(I)=f(I\cup\{i\})-f(I)$. It is clear that $f$ is nondecreasing if and only if $(\partial_i f)(I)\ge 0$ for every $i\in [n]$ and $I\subseteq[n]\setminus\{i\}$. We also have that $f$ is supermodular if and only if $(\partial_i \partial_j f)(I)\ge 0$ for every pair of distinct $i,j\in [n]$ and $I\subseteq[n]\setminus\{i,j\}$. Thus we can think of supermodular functions as having nonnegative second derivatives everywhere, which makes the case of nondecreasing functions (nonnegative first derivatives) natural.

Motivated by this parallel, we can define equivalence and irreducibility for nondecreasing functions.
\begin{definition}
We say two nondecreasing functions are \emph{equivalent} if their difference is a constant function. A nondecreasing function $f$ is \emph{irreducible} if it is not constant and not a nontrivial sum of nondecreasing functions. That is, if $f = g_1 + g_2$ for nondecreasing functions $g_1$ and $g_2$, then $g_1$ and $g_2$ are each equivalent to a nonnegative multiple of $f$.
\end{definition}

We can precisely characterize the irreducible nondecreasing functions. Given an nonempty antichain $\A$ of subsets of $[n]$, we define the up function of $\A$ to be $u_{\A}(I)=1$ if $I\supseteq J$ for some $J\in \A$ and $u_{\A}(I)=0$ otherwise.

\begin{lemma}
A nondecreasing function is irreducible if and only if it is equivalent to a function of the form $cu_{\A}$ for some nonempty antichain $\A$ of subsets of $[n]$ and some $c\in \R_{\ge 0}$.
\end{lemma}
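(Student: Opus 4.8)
The plan is to prove the two directions separately, with the easy direction being that each $cu_{\A}$ is irreducible, and the substantive direction being that every irreducible nondecreasing function has this form.

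\textbf{The ``only if'' direction.} Suppose $f$ is irreducible; by subtracting a constant we may assume $f(\varnothing) = 0$, and by nondecreasingness $f \ge 0$. The key idea is to slice $f$ by its level sets. For each real $t > 0$, let $U_t = \{I : f(I) \ge t\}$; since $f$ is nondecreasing, $U_t$ is an up-set, so $U_t = \{I : I \supseteq J \text{ for some } J \in \A_t\}$ where $\A_t$ is the antichain of minimal elements of $U_t$. The ``layer cake'' representation $f(I) = \int_0^\infty \mathbf{1}[f(I) \ge t]\, dt = \int_0^\infty u_{\A_t}(I)\, dt$ writes $f$ as a (continuous) nonnegative combination of up functions, and since $f$ takes finitely many values this is actually a finite nonnegative combination $f = \sum_k c_k u_{\A_k}$ with $c_k > 0$ and the $\A_k$ distinct antichains. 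Each $u_{\A_k}$ is nondecreasing, so by irreducibility each $u_{\A_k}$ must be equivalent to a nonnegative multiple of $f$; since $u_{\A_k}(\varnothing) = 0 = f(\varnothing)$ (assuming $\A_k \ne \{\varnothing\}$, which holds as $f(\varnothing)=0$), ``equivalent'' forces $u_{\A_k}$ to be an actual scalar multiple of $f$. If there were two distinct antichains in the decomposition this would make them scalar multiples of each other, impossible for two distinct $0/1$-valued functions that are both nonconstant. Hence there is exactly one term, $f = c_1 u_{\A_1}$, as desired.

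\textbf{The ``if'' direction.} Fix a nonempty antichain $\A$ and suppose $u_{\A} = g_1 + g_2$ with $g_1, g_2$ nondecreasing; we may normalize $g_i(\varnothing) = 0$, so $g_i \ge 0$ and $g_1 + g_2 = u_{\A}$. On the complement of the up-set of $\A$, $u_{\A}$ is $0$, forcing $g_1 = g_2 = 0$ there. The crux is to show each $g_i$ is constant on the up-set $U = \{I : u_{\A}(I) = 1\}$: since $g_i$ is nondecreasing and $U$ is an up-set on which $g_1 + g_2 = 1$, for any $J \in \A$ and any $I \supseteq J$ we have $g_i(J) \le g_i(I) \le 1$, and symmetrically $g_{3-i}(J) \le g_{3-i}(I)$; adding, $1 = g_1(J) + g_2(J) \le g_1(I) + g_2(I) = 1$, so equality holds throughout and $g_i(I) = g_i(J)$. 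It remains to see that $g_i$ takes the \emph{same} value on all minimal elements $J, J' \in \A$. For this, note $g_i$ is determined on $U$ by its values on $\A$ and is $0$ off $U$; one checks that unless $g_i(J) = g_i(J')$ for all $J, J' \in \A$, the function $g_i$ fails to be nondecreasing somewhere — the obstruction comes from a set $K$ that sits above one element of $\A$ but whose subsets include a set lying below a differently-valued element, or more directly from the fact that $g_i$ restricted to $U$ must be constant on each connected component of $U$ in the containment order, and $U$ is connected (any two sets in $U$ are joined through $[n] \in U$). This last point, that the up-set of an antichain is connected in the Boolean lattice and hence a nondecreasing $\{$-valued-on-$\A$, $0$-off-$U\}$ function is forced to be a constant multiple of $u_{\A}$, is the main technical obstacle, and I expect the cleanest route is: any $I \in U$ satisfies $I \subseteq [n] \in U$, so $g_i(I) \le g_i([n])$, and any $J \in \A$ satisfies $g_i(J) \le g_i(I)$ for $I \supseteq J$; combined with $g_1 + g_2 \equiv 1$ on $U$ this pins every value to $g_i([n])$, giving $g_i = g_i([n]) \cdot u_{\A}$.

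The main obstacle is the final rigidity argument in the ``if'' direction — ensuring $g_i$ cannot vary across the antichain — and the main idea throughout is the level-set (layer-cake) decomposition, which is the discrete analog of writing a monotone function as an integral of indicators of its superlevel sets.
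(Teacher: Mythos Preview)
Your proof is correct and follows essentially the same ideas as the paper, with one direction presented a bit differently.

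For the ``if'' direction, both arguments use the same squeeze: if $I\subseteq J$ and $u_{\A}(I)=u_{\A}(J)$, then from $g_i(I)\le g_i(J)$ and $g_1+g_2=u_{\A}$ one gets $g_i(I)=g_i(J)$. The paper then simply compares every $I$ in the up-set to $[n]$ and every $I$ outside it to $\emptyset$, concluding $f_i = f_i(\emptyset) + (f_i([n])-f_i(\emptyset))u_{\A}$. This is exactly your ``cleanest route'' via $[n]$; the connectedness detour you sketch first is unnecessary.

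For the ``only if'' direction, the paper peels off a single layer rather than all of them: it lets $\A$ be the minimal sets with $f(I)>f(\emptyset)$ and observes that $f - c\,u_{\A}$ is still nondecreasing for small enough $c>0$, giving a nontrivial decomposition unless $f$ was already equivalent to a multiple of $u_{\A}$. Your layer-cake representation $f=\sum_k c_k u_{\A_k}$ is a fuller version of the same level-set idea; you then invoke irreducibility to force all the $u_{\A_k}$ to be scalar multiples of $f$ and hence of each other. This works (splitting off one summand at a time and using that both pieces vanish at $\emptyset$), and it even yields the full decomposition of an arbitrary nondecreasing function, but it is slightly more machinery than needed for the contrapositive.
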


\begin{proof}
First we show the ``if'' direction. Let $\A$ be a nonempty antichain of subsets of $[n]$. We will show that $u_{\A}$ is irreducible. Let $f_1$ and $f_2$ be nondecreasing functions such that $u_{\A}=f_1+f_2$. Now consider sets $I\subseteq J$ such that $u_{\A}(I)=u_{\A}(J)$. We have that $f_i(I)\le f_i(J)$ by nondecreasingity. However $$u_{\A}(I)=f_1(I)+f_2(I)\le f_1(J)+f_2(J)=u_{\A}(J).$$
Thus we must have equality, so $f_1(I)=f_1(J)$ and $f_2(I)=f_2(J)$.

This fact implies that $f_i(I)=f_i([n])$ whenever $I\supseteq A$ for some $A\in \A$, and $f_i(I)=f_i(\emptyset)$ otherwise. Thus $$f_i(I)=f_i(\emptyset)+(f_i([n])-f_i(\emptyset))u_{\A}.$$ This shows that $u_{\A}$ is irreducible.

Now we show the converse. Suppose that $f$ is a nondecreasing function not equivalent to a multiple of $u_{\A}$ for any antichain $\A$. Consider the family $\F$ of subsets $I\subseteq[n]$ for which $f(I)>f(\emptyset)$. Since $f$ is not constant, $\F$ is nonempty. Let $\A$ be the family of minimal sets in $\F$. Note that $u_\A$ takes the value $1$ on elements of $\F$ and $0$ elsewhere. Now, for sufficiently small $c>0$, we have $f(I)>c+f(\emptyset)$. Thus $f-cu_\A$ is nondecreasing for some $c>0$. But $f$ is not equivalent to $cu_\A$. So $f$ is reducible.
\end{proof}

The number of antichains of subsets of $[n]$ is at least $2^{\binom{n}{n/2}}$ by choosing only subsets of size $\binom{n}{n/2}$. In fact such antichains describe most possibilities \cite{antichains-Korshunov,antichains-Pippenger}.

Given this understanding of nondecreasing functions, we can attempt to use it to understand supermodular functions. If $f$ is supermodular, then $\partial_if$ is nondecreasing. So we can construct supermodular functions by taking $n$ nondecreasing functions $g_1,\dots,g_n$ with $g_i\colon 2^{[n]\setminus\{i\}}\to \R$. However, we are restricted by the fact that $$\partial_ig_j=\partial_i\partial_jf=\partial_j\partial_if=\partial_jg_i.$$
Therefore supermodular functions can be heuristically  described as $n$ weighted sums of antichains with a compatibility condition between the sums.

Another way to understand supermodular functions is to consider the supermodularity condition on certain pairs of $I$ and $J$. 
\begin{definition}
We say that an unordered pair of subsets $\{I,J\}$ of $[n]$ is \emph{close} if $\abs{I}=\abs{J}=\abs{I\cap J}+1=\abs{I\cup J}-1$. Let $\mathcal{P}_n$ be the set of all close pairs. Note that $|\mathcal{P}_n| = \binom{n}{2}2^{n-2}$.

Given a supermodular functions $f$, for each close pair $\{I, J\}$, let the \emph{supermodularity value} of this pair be $$s_{I,J}=f(I\cap J)+f(I\cup J)- f(I)-f(J).$$ 
\end{definition}

Clearly, $\{I, J\}$ is a close pair if and only if $1_I$, $1_J$, $1_{I\cap J}$, $1_{I\cup J}$ are the vertices of a square face in the boolean hypercube, so we will treat square faces and close pairs interchangeably.

It is sufficient to define $s_{I,J}$ only when $\{I,J\}$ is a close pair because of the following lemma. Let $T\colon \R^{2^{[n]}}\to \R^{\mathcal{P}_n}$ denote the linear map sending $f$ to $s$.

\begin{lemma}
Let $f\colon 2^{[n]}\to \R$ and let $s=Tf$. Then $f$ is supermodular if and only if $s_{I,J}\ge 0$ for each close pair $\{I,J\}$.
\end{lemma}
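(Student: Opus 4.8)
The plan is to dispatch the forward direction trivially and to obtain the converse by reducing the general supermodularity inequality to the close-pair case via telescoping. If $f$ is supermodular then $f(I\cap J)+f(I\cup J)-f(I)-f(J)\ge 0$ for \emph{all} pairs $\{I,J\}$, in particular for close pairs, so every $s_{I,J}\ge 0$; this is immediate. All the work is in the converse.

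First I would record the correspondence between close pairs and elementary second differences. A close pair $\{I,J\}$ is exactly a pair of the form $\{A\cup\{i\},A\cup\{j\}\}$ with $A\subseteq[n]$ and $i,j\in[n]\setminus A$ distinct, and then $I\cap J=A$, $I\cup J=A\cup\{i,j\}$, so
\[
s_{I,J}=f(A)+f(A\cup\{i,j\})-f(A\cup\{i\})-f(A\cup\{j\})=(\partial_i\partial_j f)(A).
\]
Hence the hypothesis $s\ge 0$ is exactly the statement that $(\partial_i\partial_j f)(A)\ge 0$ for all distinct $i,j$ and all $A\subseteq[n]\setminus\{i,j\}$; equivalently, each discrete derivative $\partial_i f$ has all of its own first differences nonnegative. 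By telescoping along a chain from $A$ up to a superset $B$ (adding one element at a time), nonnegativity of all first differences of $\partial_i f$ upgrades to full monotonicity: $(\partial_i f)(A)\le(\partial_i f)(B)$ whenever $A\subseteq B\subseteq[n]\setminus\{i\}$.

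Next I would prove $f(I\cap J)+f(I\cup J)\ge f(I)+f(J)$ for arbitrary $I,J$. Write $I\setminus J=\{a_1,\dots,a_k\}$ in any order and set $A_t=(I\cap J)\cup\{a_1,\dots,a_t\}$ and $B_t=J\cup\{a_1,\dots,a_t\}$ for $0\le t\le k$, so $A_0=I\cap J$, $A_k=I$, $B_0=J$, $B_k=I\cup J$. Telescoping gives
\[
f(I)-f(I\cap J)=\sum_{t=1}^{k}(\partial_{a_t}f)(A_{t-1}),\qquad f(I\cup J)-f(J)=\sum_{t=1}^{k}(\partial_{a_t}f)(B_{t-1}).
\]
Since $a_t\in I\setminus J$ and the $a_i$ are distinct, we have $a_t\notin B_{t-1}$, and clearly $A_{t-1}\subseteq B_{t-1}\subseteq[n]\setminus\{a_t\}$; monotonicity of $\partial_{a_t}f$ then gives $(\partial_{a_t}f)(A_{t-1})\le(\partial_{a_t}f)(B_{t-1})$ for each $t$. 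Summing yields $f(I)-f(I\cap J)\le f(I\cup J)-f(J)$, which rearranges to the desired inequality. (Alternatively, the equivalence of supermodularity with nonnegativity of all $\partial_i\partial_jf$ is already asserted in the preliminary discussion, and the computation above furnishes a proof of it; combined with the close-pair/triple correspondence this finishes the lemma at once.)

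The only real obstacle is bookkeeping: one must choose the two telescoping chains to run "in parallel," taking the derivative in the same direction $a_t$ at step $t$ and ensuring $a_t$ is absent from the larger set $B_{t-1}$ so that monotonicity of $\partial_{a_t}f$ applies — this is precisely where membership $a_t\in I\setminus J$ is used. Everything else is a routine identity.
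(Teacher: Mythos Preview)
Your proof is correct and follows essentially the same telescoping idea as the paper's: both reduce the general supermodularity inequality for $I,J$ to a sum of close-pair inequalities indexed by $(I\setminus J)\times(J\setminus I)$. The only cosmetic difference is that the paper performs the double telescope directly over a grid $K(a,b)=(I\cap J)\cup\{i_1,\dots,i_a,j_1,\dots,j_b\}$, whereas you factor the same computation into two one-dimensional telescopes by first recording that each $\partial_i f$ is nondecreasing and then summing along $I\setminus J$.
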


\begin{proof}
The ``only if'' direction is clear.

For the ``if'' direction, let $f\colon 2^{[n]}\to \R$ and supppose $s=Tf$ satisfies $s_{I,J}\ge 0$ for each close pair $\{I,J\}$. We will show that $$f(I\cap J)+f(I\cup J)-f(I)-f(J)\ge 0$$ for all $I,J\subseteq [n]$.

Fix subsets $I,J\subseteq[n]$ and let $I\setminus J=\{i_1,\dots,i_{\ell_I}\}$, $J\setminus I=\{j_1,\dots,j_{\ell_J}\}$. Let $K(a,b)=(I\cap J)\cup \{i_1,\dots, i_a,j_1,\dots, j_b\}$ for $0\le a\le \ell_I$ and $0\le b\le \ell_J$. Note that $\{K(a,b-1),K(a-1,b)\}$ is a close pair for $a,b>0$. Additionally we have that $$0\le s_{K(a,b-1),K(a-1,b)}=f(K(a,b))+f(K(a-1,b-1))-f(K(a,b-1))-f(K(a-1,b)).$$
Now we sum inequality over all $1\le a\le \ell_I$ and $1\le b\le \ell_J$. Most terms on the RHS cancel, leaving us with $$0\le f(K(\ell_I,\ell_J))+f(K(0,0))-f(K(\ell_I,0))-f(K(0,\ell_J))=f(I\cup J)+f(I\cap J)-f(I)-f(J).$$
Thus $f$ is supermodular.
\end{proof}

Note that the kernel of $T$ is the space of modular functions, which has dimension $n+1$. Thus the image of $T$ has dimension $2^n-n-1$.

We can determine the image of $T$ in $\R^{\mathcal{P}_n}$ by a set of $\binom{n}{2}2^{n-2}-2^n+n+1$ linear conditions on $s$. The possible vectors $s$ obtained from supermodular functions are just the vectors satisfying these conditions with nonnegative entries. So the relevant subset of vectors is the intersection of $\im T$ with the positive orthant, which is a cone. The irreducible supermodular functions then correspond to the extreme rays of this cone.

We now characterize the linear conditions determining $\im T$. Given a permutation $\sigma=(\sigma_1,\dots,\sigma_n)\in S_n$, we let $I_r(\sigma)=\{\sigma_1,\dots,\sigma_r\}$ and $J_r(\sigma)=\{\sigma_2,\dots,\sigma_{r+1}\}$, for each $1\le r\le n-1$. Also let $I_n(\sigma)=[n]$ and $J_0(\sigma)=\emptyset$, so that $I_r\cup J_r=I_{r+1}$ and $I_r\cap J_r=J_{r-1}$. We define the \emph{path sum} of $s$ along $\sigma$ to be $$P_\sigma(s)=\sum_{r=1}^{n-1}s_{I_r,J_r}.$$
Here $\sigma$ corresponds to a maximal chain in the poset of square faces of the hypercube ordered by the relation $\{I,J\}<\{I',J'\}$ when one of $I'$ and $J'$ contains both of $I$ and $J$ and the other contains at least one of $I$ and $J$.
Additionally, we say that the path corresponding to $\sigma$ has \emph{color} $\sigma_1$. 

\begin{example}

When $n=4$ and $\sigma=(2,4,1,3)$, we obtain the following path on square faces. The color of the path is $\sigma_1=2$, which can be seen by each square face having a pair of opposite edges in the direction $1_{\{2\}}$.

\begin{center}
    \includegraphics[width=3in]{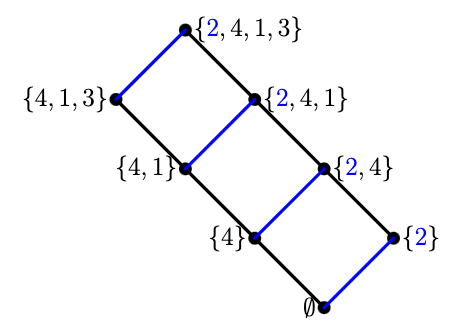}
\end{center}

% \begin{center}
% \begin{asy}
% pen node = black;
% size(7cm);
% dotfactor *= 2;
% defaultpen(fontsize(13pt));
% dot("$\emptyset$", (0,0), dir(180), node);
% dot("$\{4\}$", (-1,1), dir(180), node);
% dot("$\{4,1\}$", (-2,2), dir(180), node);
% dot("$\{4,1,3\}$", (-3,3), dir(180), node);

% dot("$\{{\color{blue} 2}\}$", (1,1), dir(0), node);
% dot("$\{{\color{blue} 2},4\}$", (0,2), dir(0), node);
% dot("$\{{\color{blue} 2},4,1\}$", (-1,3), dir(0), node);
% dot("$\{{\color{blue} 2},4,1,3\}$", (-2,4), dir(0), node);

% pen p = black + linewidth(1.5);

% draw((-2,4)--(-1,3),p);
% draw((-1,3)--(0,2),p);
% draw((0,2)--(1,1),p);

% draw((-3,3)--(-2,2),p);
% draw((-2,2)--(-1,1),p);
% draw((-1,1)--(0,0),p);

% pen p = blue + linewidth(1.5);

% draw((-2,4)--(-3,3),p);

% draw((-1,3)--(-2,2),p);

% draw((0,2)--(-1,1),p);

% draw((1,1)--(0,0),p);

% \end{asy}
% \end{center}

\end{example}

The following theorem explains the relevance of the color of a path and uses path sums to describe $\im T$.

\begin{theorem}\label{thm:path-sum}
The following are equivalent for any $s\in \R^{\mathcal{P}_n}$:

\begin{enumerate}
    \item $s\in \im T$.
    \item There exist $m_1,\dots,m_n$ such that $P_\sigma(s)=m_{\sigma_1}$ for all $\sigma\in S_n$. The value of $m_i$ will be referred to as the \emph{weight} of color $i$.
    \item For all distinct $i,j,k\in [n]$ and $I\subseteq [n]\setminus\{i,j,k\}$, $$s_{I\cup\{i\},I\cup\{j\}}+s_{I\cup\{i,j\},I\cup\{j,k\}}=s_{I\cup\{i\},I\cup\{k\}}+s_{I\cup\{i,k\},I\cup\{j,k\}}.$$
\end{enumerate}
\end{theorem}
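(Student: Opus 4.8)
The plan is to prove the cyclic chain of implications $(1)\Rightarrow(2)\Rightarrow(3)\Rightarrow(1)$, after noting that $(2)$ simply asserts that $P_\sigma(s)$ depends on $\sigma$ only through its first entry $\sigma_1$, with $m_i$ the common value over permutations $\sigma$ with $\sigma_1=i$. For $(1)\Rightarrow(2)$ I would use the identities $I_r\cap J_r=J_{r-1}$ and $I_r\cup J_r=I_{r+1}$ recorded before the theorem to write, when $s=Tf$, $s_{I_r,J_r}=f(J_{r-1})+f(I_{r+1})-f(I_r)-f(J_r)$; the path sum $P_\sigma(s)=\sum_{r=1}^{n-1}s_{I_r,J_r}$ then telescopes to $f(\varnothing)+f([n])-f([n]\setminus\{\sigma_1\})-f(\{\sigma_1\})$, which depends only on $\sigma_1$, so one sets $m_i=f(\varnothing)+f([n])-f([n]\setminus\{i\})-f(\{i\})$.

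For $(2)\Rightarrow(3)$, fix distinct $i,j,k\in[n]$ and $I\subseteq[n]\setminus\{i,j,k\}$, and let $\sigma$ be the permutation beginning with $i$, then listing the elements of $I$ in any order, then $j$, then $k$, then the remaining elements; let $\tau$ be $\sigma$ with the two entries $j$ and $k$ transposed. Since $I$ has at most $n-3$ elements, these two entries sit in positions $\ge 2$, so $\tau_1=\sigma_1$ and $(2)$ forces $P_\sigma(s)=P_\tau(s)$. The chains of square faces attached to $\sigma$ and $\tau$ coincide except in two consecutive positions, and comparing the four faces involved shows that $P_\sigma(s)-P_\tau(s)$ equals $s_{I\cup\{i\},I\cup\{j\}}+s_{I\cup\{i,j\},I\cup\{j,k\}}-s_{I\cup\{i\},I\cup\{k\}}-s_{I\cup\{i,k\},I\cup\{j,k\}}$; setting this to zero is precisely the relation in $(3)$, and every instance of $(3)$ arises this way.

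For $(3)\Rightarrow(1)$, which is the heart of the argument, I would construct $f$ by recursion on $|I|$: set $f(I)=0$ for $|I|\le 1$, and for $|I|\ge 2$ let $a<b$ be the two smallest elements of $I$ and $K=I\setminus\{a,b\}$, and put $f(I)=f(K\cup\{a\})+f(K\cup\{b\})-f(K)+s_{K\cup\{a\},K\cup\{b\}}$. By construction $(Tf)_{I,J}=s_{I,J}$ on every square face $\{I,J\}$ that is \emph{good}, meaning both elements of $I\triangle J$ are smaller than every element of $I\cap J$. To extend this to all square faces I would combine two observations: (a) for any function $g$, the vector $Tg$ satisfies every relation in $(3)$, since both sides of that relation equal $g(I\cup\{i,j,k\})-g(I\cup\{j,k\})-g(I\cup\{i\})+g(I)$, a statement that discrete derivatives commute; and (b) a suitably chosen instance of $(3)$, pivoting on the smaller element of $I\triangle J$ with third element $\min(I\cap J)$, rewrites $s_{I,J}$ at any non-good square face as a signed sum of $s$-values at three square faces, each having strictly smaller value of the potential $\Phi(I,J)=\#\{w\in I\cap J:\ w<\max(I\triangle J)\}$, which vanishes precisely on the good faces. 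Since $Tf$ and $s$ both satisfy all relations in $(3)$ and agree on all good faces, induction on $\Phi$ gives $Tf=s$, hence $s\in\im T$. (Alternatively, (a) and (b) show that restriction to the good coordinates is injective on the solution set of the equations in $(3)$; that solution set contains $\im T$, which has dimension $2^n-n-1$ equal to the number of good faces, so the two coincide.)

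The routine parts are $(1)\Rightarrow(2)$ and $(2)\Rightarrow(3)$ --- telescoping and a well-chosen pair of permutations. The real obstacle is $(3)\Rightarrow(1)$: producing an $f$ with prescribed second differences along the faces used in its own recursion is automatic, and the crux is isolating a potential function --- here $\Phi$ --- for which the relations in $(3)$ supply a terminating reduction of every square-face value to the ``good'' ones that the recursion directly pins down. I would expect the fiddly bookkeeping to be in verifying that all three faces produced by the chosen instance of $(3)$ really do have strictly smaller $\Phi$, across the cases determined by the relative order of $\min(I\triangle J)$, $\max(I\triangle J)$, and $\min(I\cap J)$.
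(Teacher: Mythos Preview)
Your proposal is correct; the arguments for $(1)\Rightarrow(2)$ and $(2)\Rightarrow(3)$ coincide with the paper's (same telescoping, same transposition of two adjacent entries of a permutation with $\sigma_1=i$). The only genuine difference is in $(3)\Rightarrow(1)$.

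The paper also builds $f$ recursively on $|J|$, but at each step it allows an \emph{arbitrary} choice of the pair $\{i,j\}\subseteq J$ and then proves, by induction on $|J|$ using a single instance of relation $(3)$, that any two choices $\{i,j\}$ and $\{i,k\}$ give the same value of $f(J)$. Once well-definedness is established, $(Tf)_{I,J}=s_{I,J}$ holds on every close pair by construction, so nothing further is needed. Your route instead fixes the canonical choice (the two smallest elements), which makes $f$ unambiguous for free but only pins down $Tf=s$ on the ``good'' faces; you then close the gap either by the potential-function descent on $\Phi$ or by the dimension count identifying $\im T$ with the solution set of $(3)$. The case analysis you anticipate (relative order of $\min(I\triangle J)$, $\max(I\triangle J)$, $\min(I\cap J)$) does go through: in both cases $r<p<q$ and $p<r<q$ one checks that each of the three auxiliary faces has $\Phi$ strictly smaller than the original, and the dimension-count alternative is clean since there are exactly $2^n-n-1$ good faces (one for each subset of $[n]$ of size $\ge 2$, taking its two smallest elements as the symmetric difference). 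The paper's approach buys a shorter endgame --- well-definedness is the whole story --- while yours isolates an explicit basis of coordinates on which the map is triangular, which is a pleasant structural statement in its own right.
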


\begin{proof}
We will show that $(1)\implies (2)\implies (3)\implies (1)$.

We first show $(1)\implies (2)$. Let $f$ be a function with $Tf=s$. We claim that  $$m_i=f([n])+f(\emptyset)-f(\{i\})-f([n]\setminus\{i\})=\partial_if([n]\setminus\{i\})-\partial_if(\emptyset)$$ satisfies condition (2). 

Consider an arbitrary $\sigma\in S_n$. Note that $$s_{I_r,J_r}=\partial_{\sigma_1}\partial_{\sigma_{r+1}}f(I_r\cap J_r)=\partial_{\sigma_1}\partial_{\sigma_{r+1}}f(J_{r-1})=\partial_{\sigma_1}f(J_{r})-\partial_{\sigma_1}f(J_{r-1}).$$
Thus the sum in $P_\sigma(s)$ telescopes to $\partial_{\sigma_1}f(J_{n-1})-\partial_{\sigma_1}f(J_0)=m_{\sigma_1},$
as desired.

Next we show that $(2)\implies (3)$. Fix distinct $i,j,k\in [n]$ and $I\subseteq[n]\setminus\{i,j,k\}$. Let $t=\abs{I}$. Choose a $\sigma\in S_n$ such that $\sigma_1=i$, $J_t(\sigma)=I$, $\sigma_{t+2}=j$, and $\sigma_{t+3}=k$. Let $\sigma'\in S_n$ be such that $\sigma_r'=\sigma_r$ for $r\ne t+2,t+3$ and $\sigma_{t+2}'=k$, $\sigma_{t+3}'=j.$ Then we have that $I_r(\sigma)=I_r(\sigma')$ except when $r=t+2$ and $I_r(\sigma)=I_r(\sigma')$ except when $r=t+1$. Since $\sigma_1=\sigma_1'$, we have $P_\sigma(s)=P_{\sigma'}(s)$. Cancelling the common terms from the sum gives
$$s_{I_{t+1}(\sigma),J_{t+1}(\sigma)}+s_{I_{t+2}(\sigma),J_{t+2}(\sigma)}=s_{I_{t+1}(\sigma'),J_{t+1}(\sigma')}+s_{I_{t+2}(\sigma'),J_{t+2}(\sigma')}.$$
After substituting for $I_r$ and $J_r$ we obtain
$$s_{I\cup\{i\},I\cup\{j\}}+s_{I\cup\{i,j\},I\cup\{j,k\}}=s_{I\cup\{i\},I\cup\{k\}}+s_{I\cup\{i,k\},I\cup\{j,k\}},$$ as desired. 

Finally we show $(3)\implies (1)$. Let $s$ be a function satisfying $(3)$. We will construct an $f$ such that $Tf=s$. To construct $f$ we define $f(J)$ inductively based on $\abs{J}$. If $\abs{J}<2$ we let $f(J)=0$.

Now suppose that we have defined $f(J)$ for all $J$ with $\abs{J}<t$, for some $t\in [2,n]$. Fix a $J$ with $\abs{J}=t$. Choose $i,j\in J$ arbitrarily let $I=J\setminus\{i,j\}$. We define $$f(J)=s_{I\cup\{i\},I\cup\{j\}}+f(I\cup \{i\})+f(I\cup \{j\})-f(I).$$
This inductive procedure defines some function $f\colon 2^{[n]}\to \R$. We claim that the choices of $i,j\in J$ do not affect the function $f$ defined by the procedure.

We prove that $f(J)$ is uniquely determined by induction on $\abs{J}$. This is clear for $\abs{J}\le 2$. Now suppose we know that $f(J)$ is uniquely determined for all $J$ with $\abs{J}\le t$ for some $J\in [2,n]$. Fix a $J$ with $\abs{J}=t$. It suffices to show that for any distinct $i,j,k\in J$ we obtain the same value for $f(J)$ be recursing with $\{i,j\}$ or $\{i,k\}$, because applying this fact twice connects any two pairs $\{i,j\}$ and $\{i',j'\}$.

Applying condition $(3)$ with $i,j,k$ and $I=J\setminus\{i,j,k\}$ we obtain that $$s_{I\cup\{i\},I\cup\{j\}}+s_{I\cup\{i,j\},I\cup\{j,k\}}=s_{I\cup\{i\},I\cup\{k\}}+s_{I\cup\{i,k\},I\cup\{j,k\}}.$$
By the inductive hypothesis we have that $$s_{I\cup\{i\},I\cup\{j\}}=f(I\cup \{i,j\})+f(I)-f(I\cup \{i\})-f(I\cup \{j\}).$$
Similarly, we have that $$s_{I\cup\{i\},I\cup\{k\}}=f(I\cup \{i,k\})+f(I)-f(I\cup \{i\})-f(I\cup \{k\}).$$
Substituting these values into condition (3) gives $$s_{I\cup\{i,j\},I\cup\{j,k\}}+f(I\cup \{i,j\})-f(I\cup\{j\})=s_{I\cup\{i,k\},I\cup\{j,k\}}+f(I\cup \{i,k\})-f(I\cup\{k\}).$$
Adding $f(I\cup\{j,k\})$ to both sides gives that the two potential values for $f(J)$ in question are in fact equal. 
\end{proof}

By the above theorem, we know that we can describe $\im T$ using linear conditions of the form $s\cdot v=0$, where $v$ has all entries $0$ except for $2$ entries of $+1$ and $2$ entries of $-1$. We will use this fact to understand the complexity of irreducible supermodular functions.

\section{Irreducibility of Balanced Multisets}\label{sec:balanced}

For each square face of the hypercube, we have an associated supermodularity value $s_{I,J}$. Additionally, we have some subset of our $n!$ paths passing through this face. Our only condition on the supermodularity values is that their sum along each path of a given ``color'' is fixed.

So, choosing a supermodular functions is equivalent to choosing a weight for each square face (and thus the corresponding set of paths) such that the each path of a given ``color'' has a fixed total weight. This is equivalent to choosing a collection of subsets of a set of size $n!$ subject to the sum of the collection having a nice form, which is the same as our simplified irreducibility problem with two modifications. First, we are only allowed to use certain subsets in our collection (i.e., those corresponding to a square face). Second, the sum of the collection doesn't have to be a perfect multiple of the set of all $n!$ paths; it only has to be count paths of each color the same number of times.

\begin{definition}
A multiset $\M$ of subsets of $[N]$ is \emph{balanced} with \emph{complexity} $m$ if each $i\in [N]$ appears in exactly $m$ sets in $\M$. We say that a balanced multiset is \emph{$\Z$-irreducible} if no proper nonempty subset is balanced. 
\end{definition}

\begin{example}
When $N=4$, the multiset $\M=\{\{1\},\{1\},\{2,3\},\{2,4\},\{3,4\}\}$ is balanced with complexity $2$ and is $\Z$-irreducible.
\end{example}

Given a multiset $\M$ of subsets of $[N]$, we can construct a vector $v=v(\M)\in \R^{2^{[N]}}$ such that $v_I$ is the number of times $I$ appears in $\M$. Then we have that $\M$ is balanced (of complexity $m$) if and only if $B_Nv(\M)=m1_{[N]}$, where $B_N$ is the $N\times 2^N$ matrix with columns $1_I$ for each $I\subseteq[N]$. This allows us to extend the definition of balanced multisets to all vectors $v\in \R^{2^{[N]}}$ with nonnegative entries. 

\begin{definition}
A vector $v\in \R_{\ge0}^{2^{[N]}}$ is \emph{balanced} if $B_Nv=m1_{[N]}$ for some $m\in \R$. A balanced vector $v$ is \emph{irreducible} if whenever $v=u_1+u_2$ for balanced $u_1$ and $u_2$, both $u_1$ and $u_2$ are real multiples of $v$. Equivalently, $v$ is irreducible if it lies on an extreme ray of the cone of all balanced vectors in $\R^{2^{[N]}}$.
\end{definition}

Given a nonzero balanced vector $v\in \R^{2^{[N]}}$ we can construct a balanced multiset $\M$ by scaling $v$ to have integer entries not sharing a common factor. We define the complexity of a balanced $v$ to be the complexity of the multiset $\M$ obtained in this way. We also say that a multiset $\M$ is \emph{irreducible} if it is obtained from an irreducible $v$ in this way. 

Note that if $\M$ is irreducible, then it is also $\Z$-irreducible. However, the reverse does not hold. For example, the multiset $\M=\{1234,4,12,135,235,45\}$ is $\Z$-irreducible but not irreducible by \cref{lem:indep}.

To analyze irreducibility for balanced multisets and vectors, we will use some results from random matrix theory. Let $M_N$ be the $N\times N$ matrix with uniform and independent $\pm 1$ entries. In \cite{matrix-singular}, Tikhomirov showed that $M_N$ is singular with probability $(1/2+o(1))^N$. We will only need that $M_N$ is invertible with probability $1-o(1/N)$

By Hadamard's inequality, $\abs{\det M_N}\le n^{n/2}$. Equality is attained when $M_N$ is a Hadamard matrix. Additionally, in \cite{tao-vu}, Tao and Vu showed that $\abs{\det M_N}\ge (cn)^{n/2}$ with probability $1-o(1)$ for fixed $c<1/e$.

For our applications, we will need the following lemma, which follows by applying row operations to $M_N$.

%mini lemma used later
\begin{lemma}\label{lem:row-operations}
Let $A$ be a uniformly random $N\times N$ matrix with $\{0,1\}$ entries and let $A_i$ be $A$ with the $i$-th column replaced with all $1$'s. Then the distribution of $\abs{\det A}$ is $2^{-N}$ times the distribution of $\abs{\det M_{N+1}}$ and the distribution of $\abs{\det A_i}$ is $2^{-N+1}$ times the distribution of $\abs{\det M_N}$.
\end{lemma}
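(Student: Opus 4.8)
The plan is to reduce both statements to $M_{N+1}$ and $M_N$ by invertible row operations together with a simple affine change of variables on the entries, since such operations change a determinant only by a fixed factor (and leave the absolute value of the determinant unchanged when the factor is $\pm 1$). First I would handle $\abs{\det A}$. Starting from a uniformly random $\pm1$ matrix $M_{N+1}$ of size $(N+1)\times(N+1)$, I would subtract the first row from every other row; this does not change the determinant. After this operation the first row is still $\pm1$, but each of the remaining $N$ rows has entries in $\{-2,0,2\}$, and crucially the entry in a given position is $-2$, $0$, or $2$ according to whether the corresponding pair (first-row entry, original entry) is $(+1,-1)$, agrees, or $(-1,+1)$ — so each such entry is $2$ times a random variable whose distribution depends on the first row. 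The cleanest route is instead: expand $\det M_{N+1}$ along the first column after doing column operations. Let me describe it carefully: apply the substitution that replaces each $\pm1$ entry $\epsilon$ by $(1-\epsilon)/2\in\{0,1\}$ in all but the first row and first column; formally, multiply on the left and right by diagonal $\pm1$ matrices and add multiples of an all-ones rank-one piece. Concretely, write $M_{N+1}$ in block form with first row/column $\pm1$ and the lower-right $N\times N$ block $B$ having $\pm1$ entries; replacing $B$ by $(J-B)/2$ where $J$ is all-ones corresponds to the affine map $B\mapsto (J-B)/2$, which is exactly the uniform map from $\pm1$ matrices to $\{0,1\}$ matrices. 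I would then show that after clearing the first row and first column by row/column operations, $\det M_{N+1} = \pm 2^{N}\det A$ where $A = (J-B)/2$ is a uniform $\{0,1\}$ matrix of size $N$. Taking absolute values and using that $A$ is uniform gives the first claim.

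For $\abs{\det A_i}$, I would do essentially the same but one size down and without removing a row. Start from $M_N$ (size $N\times N$, $\pm1$). Fix the column index $i$. First perform column operations so that column $i$ becomes all $1$'s: since a $\pm1$ matrix whose $i$-th column I want to standardize, I multiply each row $r$ by the sign of its $i$-th entry (this is left multiplication by a diagonal $\pm1$ matrix, changing $\det$ by $\pm1$); now column $i$ is all $1$'s and the other $N-1$ columns still have $\pm1$ entries but with a distribution that — conditioned on the original signs — is still uniform $\pm1$ on those entries (multiplying a uniform $\pm1$ by a fixed sign is uniform $\pm1$). Next apply the affine map $\epsilon\mapsto(1-\epsilon)/2$ to the $\pm1$ entries in the $N-1$ non-$i$ columns; this is the substitution $C\mapsto (J-C)/2$ on the $N\times(N-1)$ block of those columns, which multiplies the determinant by $(1/2)^{N-1}\cdot(\pm1)^{\,\cdot}$ after accounting for the rank-one corrections needed to realize the affine shift via column operations using the all-ones $i$-th column. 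After this, the matrix is exactly $A_i$ with $A$ a uniform $\{0,1\}$ matrix of size $N$. Hence $\abs{\det M_N} = 2^{N-1}\abs{\det A_i}$, i.e.\ the distribution of $\abs{\det A_i}$ is $2^{-N+1}$ times that of $\abs{\det M_N}$.

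The one point requiring care — and the main (if mild) obstacle — is checking that the affine substitution $\epsilon\mapsto(1-\epsilon)/2$ can genuinely be realized by determinant-preserving column operations plus a scalar factor, i.e.\ that adding $\tfrac12$ times an all-ones column to each of the target columns (to turn $-\tfrac{\epsilon}{2}+\tfrac12$ into the right thing) is legitimate. This works precisely because after standardizing we have an all-ones column available (column $i$ in the second case, the first column in the first case after the row-difference step), so each shift is the addition of a multiple of an existing column and does not change the determinant; the only net effect is the scalar $2^{-(N-1)}$ or $2^{-N}$ pulled out from scaling those columns by $\tfrac12$. I would also note that the sign corrections (the diagonal $\pm1$ multiplications) and the row-difference step only contribute factors of $\pm1$, which disappear on taking $\abs{\,\cdot\,}$, so the bijection is between distributions of absolute values of determinants, exactly as stated. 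Finally, one should observe that these operations are bijections between the respective finite sample spaces ($\pm1$ matrices of the larger size and $\{0,1\}$ matrices of the smaller size, the latter possibly together with the independent data of a uniform $\pm1$ row/the standardized column's original signs, which are uniform and independent of everything else), so the pushforward of the uniform measure is the uniform measure, giving the claimed equality of distributions.
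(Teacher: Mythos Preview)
Your approach is correct and is essentially the same as the paper's: both arguments pass between the $\{0,1\}$ and $\pm1$ models via diagonal sign flips together with column operations against an all-ones column, picking up the factor $2^{N-1}$ or $2^N$ from the scaling. The only organizational difference is that you run the transformations from $M$ toward $A$ and handle the two claims independently, whereas the paper goes from $A$ toward $M$ and, for the first claim, bootstraps from the second by embedding $A$ as the lower-right block of an $(N{+}1)\times(N{+}1)$ matrix with an all-ones first column and then randomizing the remaining columns to land in the $A_1$ situation one dimension up; this lets the paper avoid repeating the ``clear the first row and column'' computation that you sketch.
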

\begin{proof}
We first show the second claim. From $A_i$, replace each column $j\ne i$ with column $i$ minus twice column $j$ to obtain a matrix $A_i'$. We have that $\det A_i'=(-2)^{N-1}\det A_i$, and $A_i'$ has $\pm 1$ entries. Next independently negate each row of $A_i'$ with probability $1/2$ to obtain $A_i''$. Then $\abs{\det A_i''}=\abs{\det A_i'}$ and $A_i''$ is distributed identically to $M_N$. So the distribution of $\abs{\det A_i}$ is $2^{-N+1}$ times the distribution of $\abs{\det M_N}$.

Now we show the first claim. Consider the $(N+1)\times (N+1)$ matrix $A'$ with $A$ in the bottom right $N\times N$ block, all $1$'s in the first column, and all $0$'s in the rest of the top row. Then $\det A'=\det A$. Now, we construct a new matrix $A''$, obtained from $A'$ as follows. Independently, for each $i>1$, with probability $1/2$, either keep column $i$ the same or replace column $i$ with column $1$ minus column $i$. Then $\abs{\det A''}=\abs{\det A'}$ and $A''$ is distributed the same as $A_1$ in the second claim with $N$ replaced by $N+1$. So the distribution of $\abs{\det A}$ is $2^{-N}$ times the distribution of $\abs{\det M_{N+1}}$.
\end{proof}

% Mention Carath\'{e}odory's Theorem.

Now we analyze irreducible balanced vectors. First we prove the following lemma.

\begin{lemma}\label{lem:indep}
Let $v$ be an irreducible balanced vector in $\R^{2^{[N]}}$. Then the set of vectors $\{1_I\colon v_I>0\}$ is linearly independent. 
\end{lemma}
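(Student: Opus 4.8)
The plan is to argue by contradiction: suppose $v$ is an irreducible balanced vector but $\{1_I : v_I > 0\}$ is linearly dependent. Write $\mathcal{S} = \{I : v_I > 0\}$ for the support. A linear dependence among $\{1_I : I \in \mathcal{S}\}$ gives a nonzero vector $w \in \R^{2^{[N]}}$ supported on $\mathcal{S}$ with $B_N w = 0$. The idea is that $w$ is then a ``balanced direction'' (balanced with $m = 0$), so $v \pm \varepsilon w$ are both balanced for small $\varepsilon > 0$; since $w$ is supported inside the support of $v$, for $\varepsilon$ small enough both $v + \varepsilon w$ and $v - \varepsilon w$ have nonnegative entries, hence are genuine balanced vectors. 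Then $v = \tfrac12(v + \varepsilon w) + \tfrac12(v - \varepsilon w)$ exhibits $v$ as a sum of two balanced vectors, and irreducibility forces each of these to be a real multiple of $v$.

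The key step is to extract a contradiction from that last conclusion. If $v + \varepsilon w = \lambda v$ then $\varepsilon w = (\lambda - 1) v$, so $w$ is a multiple of $v$; but $w \neq 0$ and $w$ has entries of both signs on its support (a nontrivial linear dependence cannot have all coefficients of one sign, since the $1_I$ all lie in the open halfspace $\{x \cdot 1_{[N]} > 0\}$ — here I should note that $I = \varnothing$, i.e. the zero column, can be excluded since $1_\varnothing = 0$ contributes nothing to a dependence, or handled separately), whereas $v$ has all nonnegative entries, a contradiction. So I should first observe that without loss of generality the empty set is not in the support (or if it is, it doesn't participate in any dependence among the $1_I$), then note that $\{1_I : I \neq \varnothing\}$ all lie strictly on one side of a hyperplane through the origin, so any linear dependence among them must have mixed-sign coefficients.

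I expect the main obstacle to be the bookkeeping around the empty set and the possibility that $v$ itself might be zero or that $\lambda = 1$. If $v_\varnothing > 0$, the column $1_\varnothing = 0$ is trivially a dependence, so one must be careful to phrase the claim as: $\{1_I : v_I > 0, I \neq \varnothing\}$ is linearly independent — but actually since $1_\varnothing$ is the zero vector, ``linearly independent'' as stated in the lemma already fails whenever $\varnothing$ is in the support, so implicitly the intended reading is over nonempty $I$, and I should make this explicit in the proof or observe $v_\varnothing = 0$ for a suitably normalized representative. The case $\lambda = 1$ in $v + \varepsilon w = \lambda v$ immediately gives $\varepsilon w = 0$, contradicting $w \neq 0$; the case $\lambda \neq 1$ gives $w$ a nonzero multiple of $v$, contradicting mixed signs. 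Both are quick once set up.

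In summary: (1) reduce to the case $\varnothing \notin \mathcal{S}$; (2) from a nontrivial dependence produce $w \neq 0$ with $B_N w = 0$, $\operatorname{supp}(w) \subseteq \mathcal{S}$, and $w$ having entries of both signs; (3) for small $\varepsilon > 0$, check $v \pm \varepsilon w \in \R_{\geq 0}^{2^{[N]}}$ and both are balanced; (4) apply irreducibility to $v = \tfrac12(v+\varepsilon w) + \tfrac12(v - \varepsilon w)$ to get $v + \varepsilon w = \lambda v$; (5) deduce $w$ is a scalar multiple of $v$, contradicting the sign condition from step (2).
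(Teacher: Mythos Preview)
Your proposal is correct and follows essentially the same approach as the paper: both produce a nonzero $w$ in the kernel of $B_N$ supported on $\{I:v_I>0\}$, then decompose $2v=(v+tw)+(v-tw)$ and contradict irreducibility. The only cosmetic difference is in the final step---the paper takes $t$ maximal so that one of $v\pm tw$ has strictly smaller support, whereas you take $t=\varepsilon$ small and argue that $w$, having mixed-sign entries (since the nonempty $1_I$ lie in an open halfspace), cannot be a scalar multiple of the nonnegative $v$; your handling of the $\varnothing$ edge case is in fact more explicit than the paper's.
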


\begin{proof}
Suppose this is not the case. Let $\{I_1,\dots,I_\ell\}=\{1_I\colon v_I>0\}$. Then there exists $\alpha_i$ not all $0$ such that $\sum_{i=1}^\ell \alpha_i1_{I_i}=0.$

Now, let $$t=\min\left\{\frac{v_{I_i}}{\abs{\alpha_i}}\colon 1\le i\le \ell, \alpha_i\ne0\right\}.$$

Define $v^+$ by $v_{I_i}^+=v_{I_i}+t\alpha_i$ and $v_I^+=0$ if $v_I=0$. Similarly define $v^-$ by $v_{I_i}^-=v_{I_i}-t\alpha_i$ and $v_I^-=0$ if $v_I=0$.

By definition of $t$, $v^+$ and $v^-$ have nonnegative entries and $v^++v^-=2v$. Additionally, there exists $i$ such that $v_{I_i}^+=0$ or $v_{I_i}^-=0$ but $v_{I_i}\ne0$. Thus $v^+$ and $v^-$ cannot both be real multiples of $v$. So $v$ is reducible.
\end{proof}

We are now ready to bound the complexity of irreducible balanced vectors. The key idea is to consider the support of such a vector and think about solving for the values of the nonzero entries.

\begin{theorem}\label{thm:balanced-vector-complexity}
Let $v$ be an irreducible balanced vector in $\R^{2^{[N]}}$ with complexity $m$. Then $$m\le \max_{A\in \{0,1\}^{N\times N}} \det A\le (N+1)^{(N+1)/2}/2^N.$$
\end{theorem}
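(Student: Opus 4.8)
The plan is to pass from $v$ to an invertible square $\{0,1\}$-submatrix of $B_N$ cut out by the support of $v$, and then read off $m$ via Cramer's rule. Let $S=\{I\subseteq[N]\colon v_I>0\}$ be the support. By \cref{lem:indep} the vectors $\{1_I\colon I\in S\}$ are linearly independent, so $\abs{S}\le N$ and $\varnothing\notin S$ (otherwise $0=1_\varnothing$ would lie in this set). Hence each $1_I$ with $I\in S$ is a nonzero nonnegative vector, and since $v\neq 0$ the balanced condition $B_Nv=m1_{[N]}$ forces $m>0$. Write $C$ for the $N\times\abs{S}$ submatrix of $B_N$ with columns $1_I$ ($I\in S$), and $v_S$ for the restriction of $v$ to $S$; then $v_S$ has positive integer entries of gcd $1$, because $v$ is the primitive integer representative of its ray. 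The balanced condition restricts to $Cv_S=m1_{[N]}$. Since the columns of $C$ are linearly independent, $\operatorname{rank}C=\abs{S}$, so I may choose $\abs{S}$ linearly independent rows of $C$; they form an invertible $\{0,1\}$-matrix $C'$ of size $\abs{S}$, and restricting $Cv_S=m1_{[N]}$ to those rows gives $C'v_S=m\mathbf 1$, where $\mathbf 1$ is the all-ones vector in $\R^{\abs{S}}$.

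The next step is to extract $m$. For $1\le j\le\abs{S}$ let $C'_j$ be $C'$ with its $j$-th column replaced by $\mathbf 1$. From $v_S=m(C')^{-1}\mathbf 1=\frac{m}{\det C'}\operatorname{adj}(C')\,\mathbf 1$ and the Laplace-expansion identity $(\operatorname{adj}(C')\,\mathbf 1)_j=\det C'_j$, we obtain
$$(\det C')\,(v_S)_j = m\,\det C'_j\qquad\text{for all } j.$$
Since $(v_S)_j>0$, $m>0$, and $\det C'\neq0$, every $\det C'_j$ is nonzero; taking the gcd of the coordinates on both sides and using $\gcd_j(v_S)_j=1$ yields $\abs{\det C'}=m\cdot\gcd_j\abs{\det C'_j}$, hence
$$m=\frac{\abs{\det C'}}{\gcd_j\abs{\det C'_j}}\le\abs{\det C'}.$$
Permuting two columns of $C'$ if necessary shows $\abs{\det C'}=\det A$ for some $A\in\{0,1\}^{\abs{S}\times\abs{S}}$, and padding with an identity block shows $\max_{A\in\{0,1\}^{k\times k}}\det A$ is nondecreasing in $k$; therefore $m\le\abs{\det C'}\le\max_{A\in\{0,1\}^{N\times N}}\det A$, which is the first inequality.

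For the second inequality I would invoke \cref{lem:row-operations}, which identifies, up to the factor $2^{-N}$, the achievable values of $\abs{\det A}$ over $A\in\{0,1\}^{N\times N}$ with those of $\abs{\det M_{N+1}}$ over $\pm1$ matrices: concretely, border the matrix $J-2A$ (which has $\pm1$ entries) by an all-ones top row and an all-ones left column, then subtract the top row from the others to get an $(N+1)\times(N+1)$ matrix with $\pm1$ entries and determinant $(-2)^N\det A$. By Hadamard's inequality a $\pm1$ matrix of size $N+1$ has absolute determinant at most $(N+1)^{(N+1)/2}$, so $\abs{\det A}\le(N+1)^{(N+1)/2}/2^N$; maximizing over $A$ finishes the proof. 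The only step requiring genuine care is the gcd identity $\abs{\det C'}=m\gcd_j\abs{\det C'_j}$ — one has to track that it is precisely the primitivity of $v$ that gives $\gcd_j(v_S)_j=1$, and that dividing out the common factor of the cofactors can only decrease $m$ — together with the size-monotonicity of the maximum needed to cover the case $\abs{S}<N$; the reduction to Hadamard is routine.
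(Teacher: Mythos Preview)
Your argument is correct and follows the same strategy as the paper: use \cref{lem:indep} to bound the support, set up a square $\{0,1\}$-system with right-hand side $m\mathbf 1$, apply Cramer's rule together with the primitivity of $v$ to get $m\le\abs{\det}$, and then reduce to Hadamard via \cref{lem:row-operations}. The only difference is cosmetic: where the paper \emph{extends} the $N\times\abs{S}$ column system to an $N\times N$ invertible $\{0,1\}$-matrix by adjoining further indicator columns (which is always possible since the $1_I$ span $\R^N$), you instead \emph{restrict} to $\abs{S}$ independent rows to obtain an $\abs{S}\times\abs{S}$ system and then pad with an identity block to reach size $N$. Both routes land on the same Cramer/gcd computation, so this is a variation in bookkeeping rather than in content.
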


\begin{proof}
Without loss of generality scale $v$ so that it has relatively prime integer entries. In particular we have $B_Nv=m1_{[N]}$.

Let $\{I_1,\dots,I_\ell\}=\{1_I\colon v_I>0\}$. Note that $\ell\le N$ by \cref{lem:indep}. If $\ell<N$ choose $N-\ell$ more sets $J_{\ell+1},\dots,J_N$ such that the set of vectors $\{1_{I_1},\dots,1_{I_\ell},1_{J_\ell+1},\dots,1_{J_N}\}$ is linearly independent.

Let $x$ be the $N\times1$ column vector with $i$-th entry $v_{I_i}$ for $i\le \ell$ and all other entries $0$. Let $A$ be the $N\times N$ matrix with $i$-th column $v_{I_i}$ for $i\le \ell$ and $i$-th column $v_{J_i}$ for $i>\ell$. Then we have that $$Ax=m1_{[N]}.$$
Now, consider solving for $x$ by treating this as a linear system in the entries of $x$. By Cramer's rule, we have that $v_{I_i}=x_i=m\det(A_i)/\det(A)$, where $A_i$ is the matrix $A$ with the $i$-th column replacing by $1_{[N]}$. Since the values $x_i$ are positive integers that are collectively relatively prime, we must have that $$m=\frac{\det(A)}{\gcd(\det(A_1),\dots,\det(A_\ell),\det(A))}\le \det(A).$$
Now by Hadamard's inequality and \cref{lem:row-operations}, we have that $m\le (N+1)^{(N+1)/2}/2^N$, as desired.
\end{proof}

We can extend this result to $\Z$-irreducibility using Carath\'{e}odory's Theorem, losing a negligible factor of $2^N$.

\begin{theorem}
Let $\M$ be a $\Z$-irreducible multiset with complexity $m$. Then $m\le (N+1)^{(N+1)/2}$.
\end{theorem}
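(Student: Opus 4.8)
The plan is to view a $\Z$-irreducible multiset as an irreducible element of the additive monoid of nonnegative integer balanced vectors — that is, as a Hilbert basis element of the cone $C$ of balanced vectors in $\R^{2^{[N]}}$ — and then to combine Carath\'eodory's theorem with the bound of \cref{thm:balanced-vector-complexity} for extreme rays.

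Concretely, let $v=v(\M)$ be the multiplicity vector, so $v$ has nonnegative integer entries and $B_N v = m 1_{[N]}$; thus $v$ lies in the pointed rational polyhedral cone $C$. Saying that $\M$ is $\Z$-irreducible is exactly saying that there is no integer vector $v'$ with $0\le v'\le v$ coordinatewise, $v'$ balanced, and $v'\notin\{0,v\}$ (the degenerate cases $\varnothing\in\M$ or $\M$ empty have complexity $0$ and may be excluded). First I would apply Carath\'eodory's theorem for cones to write $v=\sum_{i=1}^{k}c_i e_i$, where each $e_i$ is the primitive integer generator of an extreme ray of $C$ (equivalently, an irreducible balanced vector), the $c_i$ are positive reals, the $e_i$ are linearly independent, and hence $k\le\dim C\le 2^N-N+1$.

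The crucial step is a dichotomy on the sizes of the coefficients $c_i$. If some $c_i\ge 1$, then because the remaining summands are coordinatewise nonnegative we get $e_i\le v$ coordinatewise; so $e_i$ is the multiplicity vector of a nonempty balanced sub-multiset of $\M$, and $\Z$-irreducibility forces $e_i=v$. In that case $v$ is itself an irreducible balanced vector, and \cref{thm:balanced-vector-complexity} gives $m\le\max_{A\in\{0,1\}^{N\times N}}\det A\le (N+1)^{(N+1)/2}/2^N\le (N+1)^{(N+1)/2}$. Otherwise $0<c_i<1$ for every $i$; writing $m_i$ for the complexity of $e_i$ (so $B_N e_i=m_i 1_{[N]}$) and applying $B_N$ to both sides of $v=\sum c_i e_i$ gives $m=\sum_{i=1}^{k}c_i m_i\le\sum_{i=1}^{k}m_i$. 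Each $e_i$ is an irreducible balanced vector, so $m_i\le (N+1)^{(N+1)/2}/2^N$ by \cref{thm:balanced-vector-complexity}, and since $k\le 2^N-N+1$ we conclude $m\le (2^N-N+1)(N+1)^{(N+1)/2}/2^N\le (N+1)^{(N+1)/2}$.

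I do not expect a genuine obstacle; the point demanding care is the ``$c_i\ge 1$'' case, where one must be sure that (i) the extreme rays of $C$ are generated by primitive \emph{lattice} vectors, so that each $e_i$ genuinely defines a sub-multiset of $\M$, and (ii) $c_i\ge 1$ together with coordinatewise nonnegativity of the remaining terms really yields $e_i\le v$ — this is precisely the inequality that lets $\Z$-irreducibility be invoked. Everything else is bookkeeping, and the loss of a factor $2^N$ compared with \cref{thm:balanced-vector-complexity} is exactly the gap between $\dim C\approx 2^N$ and the per-extreme-ray complexity bound $(N+1)^{(N+1)/2}/2^N$.
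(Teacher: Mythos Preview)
Your proposal is correct and follows essentially the same approach as the paper: apply Carath\'eodory's theorem to decompose $v$ into a nonnegative combination of primitive extreme-ray generators, use $\Z$-irreducibility to force each coefficient below $1$ (or else $v$ itself is an extreme-ray generator), and then sum the per-ray complexity bounds from \cref{thm:balanced-vector-complexity}. Your explicit dichotomy on the case $c_i\ge 1$ and the slightly sharper dimension bound $k\le 2^N-N+1$ are minor refinements over the paper's more terse treatment (which simply asserts $\lambda_j<1$ and uses $r\le 2^N$), but the argument is the same.
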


\begin{proof}
Let $v=v(\M)$. By Carath\'{e}odory's Theorem, we can write $v$ as a positive linear combination of irreducible balanced vectors $v_1,\dots,v_r$ with $r\le 2^N$. Without loss of generality, each $v_j$ has relatively prime integer entries. 

Now, let $v=\sum_{j=1}^r\lambda_jv_j$ for some reals $\lambda_j\ge 0$. Note that $\lambda_j<1$ because otherwise we could reduce $\M$ from $v=v_j+(v-v_j)$.

Now we have that $$m(\M)1_{[N]}=m(v)1_{[N]}=B_Nv=B_N\sum_{j=1}^r\lambda_jv_j=\sum_{j=1}^r\lambda_jB_Nv_j=\sum_{j=1}^r\lambda_jm(v_j)1_{[N]}.$$

So, by \cref{thm:balanced-vector-complexity} we have that $$m(v)=\sum_{j=1}^r\lambda_jm(v_j)<r(N+1)^{(N+1)/2}/2^N\le (N+1)^{(N+1)/2}.$$

\end{proof}

To construct a lower bound on the largest possible complexity $m$, it suffices to construct a matrix $A$ with large determinant such that the gcd factor is small. It is unlikely that all matrices $A$ with large determinant also give a large gcd factor, but we have not established this yet. We expect that a lower bound is possible losing at most a factor of $c^N$ for some $c\in \R$.

\begin{theorem}
The number of distinct irreducible balanced vectors is $(1-o(1))2^{N^2}/N!$.
\end{theorem}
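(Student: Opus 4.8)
The plan is to count irreducible balanced vectors by counting their supports and showing that almost every admissible support carries exactly one irreducible balanced vector (up to scaling). By \cref{lem:indep}, the support $\{1_I : v_I > 0\}$ of an irreducible balanced vector is a linearly independent set of hypercube vertices; I will argue the generic such support has size exactly $N$. Indeed, if the support has size $\ell < N$, then after scaling $v$ has relatively prime positive integer entries and $B_N v = m 1_{[N]}$ forces the $\ell$ chosen columns $1_{I_1},\dots,1_{I_\ell}$ to satisfy $\sum x_i 1_{I_i} = m 1_{[N]}$ with all $x_i > 0$; this is a strong linear-algebraic constraint (the all-ones vector lies in the span of fewer than $N$ of the $1_I$), so only a lower-order number of subsets of each small size can appear. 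For a support of full size $N$, say columns $1_{I_1},\dots,1_{I_N}$ forming an invertible matrix $A \in \{0,1\}^{N\times N}$, the balance equation $Ax = m 1_{[N]}$ has a one-dimensional solution space, so there is at most one irreducible balanced vector with that support; it genuinely occurs precisely when $A^{-1} 1_{[N]}$ has all entries of the same strict sign (equivalently all $\det A_i$ share the sign of $\det A$ and are nonzero).

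First I would set up the counting: the number of ordered $N$-tuples of distinct subsets of $[N]$ whose indicator vectors are linearly independent is $(1-o(1))2^{N^2}$, since a uniformly random $0/1$ matrix is invertible with probability $1 - o(1)$ by \cref{lem:row-operations} together with Tikhomirov's bound (we only need invertibility with probability $1-o(1)$). Dividing by $N!$ to pass from ordered tuples to unordered supports gives the claimed main term $2^{N^2}/N!$, so it suffices to show that for a $(1-o(1))$-fraction of these supports the sign condition $A^{-1}1_{[N]} > 0$ (coordinatewise, up to overall sign) holds, and that supports of size $< N$ contribute only $o(2^{N^2}/N!)$. The second point is the easier one: for each fixed $\ell < N$ the number of $\ell$-subsets with $1_{[N]}$ in their span is at most a fixed polynomial-in-nothing times $\binom{2^N}{\ell}$ restricted by one linear condition, which is exponentially smaller than $2^{N^2}/N!$; I would make this precise by noting that once $1_{[N]}$ is required to be a positive combination of $\ell$ chosen $1_{I}$'s, the last column is essentially determined, costing a factor of roughly $2^{-N}$ relative to the full count, and then summing the geometric series over $\ell$.

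The main obstacle is the sign condition: showing that for almost all invertible $A \in \{0,1\}^{N\times N}$, the vector $A^{-1} 1_{[N]}$ has all coordinates strictly nonzero and of one sign. Strict nonzero-ness is the generic event (it fails on a measure-zero / exponentially-rare set, controlled again by non-singularity of the $A_i$'s via \cref{lem:row-operations}). The sign alignment is the delicate part — a priori one expects $A^{-1}1_{[N]}$ to have mixed signs. I would handle this by a symmetrization argument: the natural involution is to replace $A$ by the matrix $\bar A$ with $1_{I_i}$ replaced by $1_{[N]} - 1_{I_i} = 1_{[N]\setminus I_i}$ in each column (or in a carefully chosen subset of columns), which conjugates the balance equation in a controlled way and flips the sign pattern of the solution. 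More robustly, I would fix which coordinates of $x = A^{-1}1_{[N]}$ should be made positive by permuting/complementing columns, and show each sign pattern is roughly equinumerous, so the "all positive" pattern occurs a $(1-o(1))$-fraction of the time among supports where some coordinate-complementation makes all entries positive — using that complementing a column $I_i \mapsto [N]\setminus I_i$ preserves the $\{0,1\}$-matrix ensemble. Once the fraction of supports yielding a valid irreducible balanced vector is shown to be $1 - o(1)$, the count $(1-o(1))2^{N^2}/N!$ follows by combining the three pieces: total linearly independent ordered tuples, division by $N!$, and negligibility of degenerate supports and of bad sign patterns.
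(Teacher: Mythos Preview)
Your overall strategy --- count supports, invoke random-matrix invertibility for the main term, and discard small supports --- is exactly the paper's. The upper bound $\binom{2^N}{N}$ via \cref{lem:indep} is what the paper uses, and the paper's lower bound likewise samples a uniform $A\in\{0,1\}^{N\times N}$, applies Tikhomirov through \cref{lem:row-operations} and a union bound to make $A,A_1,\dots,A_N$ all invertible with probability $1-o(1)$, and then simply asserts that the resulting $x=A^{-1}1_{[N]}$ produces an irreducible balanced vector supported on $N$ distinct sets. In particular, the paper's proof as written does not explicitly justify the entrywise positivity of $x$ that you correctly flag as the crux.

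Your proposed fix, however, does not go through. Replacing column $i$ by its complement $1_{[N]\setminus I_i}$ does \emph{not} simply negate $x_i$ while fixing the other coordinates: a direct computation from $Ax=1_{[N]}$ gives the new normalized solution
\[
x_i \longmapsto \frac{-x_i}{1-x_i},\qquad x_j \longmapsto \frac{x_j}{1-x_i}\quad (j\ne i),
\]
so when $x_i>1$ the sign of $x_i$ is preserved while \emph{every other} sign flips. The $2^N$ column-complementations therefore do not act by freely permuting the $2^N$ sign patterns, and those patterns are far from equinumerous --- already at $N=2$ every invertible $\{0,1\}$-matrix with all $x_i\ne 0$ has $x>0$. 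Worse, even if your equidistribution claim held, it would put the all-positive pattern at frequency $2^{-N}$, yielding only about $2^{N^2-N}/N!$ and missing the stated count by a factor of $2^N$; your sentence jumping from ``roughly equinumerous'' to ``a $(1-o(1))$-fraction'' is exactly where the argument breaks. What is actually needed, and what neither your sketch nor the paper's written argument supplies, is a direct reason that the all-positive pattern itself occurs with probability $1-o(1)$ (for instance a concentration estimate showing $A^{-1}1_{[N]}$ is close to $(2/N)1_{[N]}$, reflecting that each column of $A$ has mean $\tfrac12\,1_{[N]}$).
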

%we actually have even tighter asymptotics but i don't think we care that much

\begin{proof}

Recall that each irreducible balanced vector is supported on most $N$ distinct sets. For each choice of $N$ distinct sets, we can solve for the unique irreducible balanced vector supported on a subset those sets. So we have an upper bound of $$\binom{2^N}{N}=(1-o(1))\frac{2^{N^2}}{N!}.$$

Now, consider sampling a matrix $A$ from all matrices $N\times N$ matrices with $\{0,1\}$ entries, uniformly at random. Then construct matrices $A_i$ for $1\le i\le N$ by replacing the $i$-th column of $A$ with $\vec{1}_N$.

Since $M_N$ and $M_{N+1}$ are singular with probability $o(1/N)$, by \cref{lem:row-operations}, we have that $A,A_1,\dots,A_N$ are each singular with probability at most $(\frac{1}{2}+o(1))^{-N+1}$. So by a union bound we have that all of $A,A_1,\dots,A_N$ are invertible with probability $1-o(1)$. Now we focus on the $(1-o(1))2^{N^2}$ matrices $A$ for which this holds.

When we solve $Ax=\vec{1}_N$, all entries of $x$ will be nonzero. This gives us a irreducible sequence with exactly $n$ distinct sets. Over all matrices $A$, we will obtain each such sequence exactly $N!$ times, giving a lower bound of $(1-o(1))\frac{2^{N^2}}{N!}$
\end{proof}

This also shows that almost all irreducible sequences have exactly $N$ distinct sets. Also note that we can obtain a smaller error term by using the full strength of Tikhomirov's result in \cite{matrix-singular}.

Obtaining an upper bound for the number of $\Z$-irreducible sequences is more difficult. We can obtain a weak bound by counting all possible sequences with $m\le (N+1)^{(N+1)/2}$ or via Carath\'{e}odory's Theorem, but this unlikely to be tight.

\section{Upper Bounds for Irreducible Supermodular Functions}\label{sec:upper-bounds}

In this section we analyze irreducible supermodular functions. We consider the vector of supermodularity values $s\in \R^{\mathcal{P}_n}$, indexed by square faces of the hypercube. We have that $s$ lies in a fixed subspace of dimension $2^n-n-1$ given the restrictions from \cref{sec:supermodular-analysis}. For $s$ to be irreducible, it must lie on an extreme ray of the cone $s\ge 0$ in this subspace. In particular, any irreducible $s$ must satisfy $2^n-n-2$ linearly independent conditions of the form $s_{I,J}=0$. These conditions allow us to solve for $s$ up to a scaling factor. So, we obtain the following theorem.

\begin{theorem}
There are at most $\binom{n^2 2^n}{2^n} $ irreducible supermodular functions up to equivalence.
\end{theorem}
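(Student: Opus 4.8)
The plan is to bound the number of extreme rays of the cone $\{s \ge 0\} \cap \im T$ by counting the number of ways to select the defining equalities, exactly as in the proof of \cref{thm:balanced-vector-complexity}, but now working in the $s$-coordinates. The ambient space $\im T$ has dimension $2^n - n - 1$, so any extreme ray is cut out by $2^n - n - 2$ linearly independent facet equations $s_{I,J} = 0$ together with the linear equations defining $\im T$. By \cref{thm:path-sum}(3), the equations defining $\im T$ are of the very special form $s_{I,J} + s_{I',J'} = s_{I'',J''} + s_{I''',J'''}$, i.e.\ each has at most four nonzero coefficients, all of them $\pm 1$; combined with the coordinate-vanishing equations, the entire linear system determining a candidate extreme ray has a matrix whose entries lie in $\{-1,0,1\}$ and which has at most a bounded number of nonzeros per row.

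First I would set up the counting: there are $|\mathcal P_n| = \binom{n}{2}2^{n-2} < n^2 2^n$ coordinates, hence at most $\binom{|\mathcal P_n|}{2^n - n - 2} \le \binom{n^2 2^n}{2^n}$ ways to choose which $2^n - n - 2$ of the coordinate hyperplanes $s_{I,J} = 0$ to impose. (Here I use $\binom{a}{b} \le \binom{a}{b'}$ is not needed; rather I bound $\binom{n^2 2^n}{2^n-n-2} \le \binom{n^2 2^n}{2^n}$ since $2^n - n - 2 \le 2^n$ and $2^n \le \tfrac12 n^2 2^n$ for all $n$, so the binomial coefficient is increasing up to that point.) Next I would argue that once this choice is fixed, the ray (if it exists and is genuinely extreme) is determined: adjoining the $\im T$ equations from \cref{thm:path-sum}(3) gives a linear system whose solution space, for an \emph{extreme} ray, is at most one-dimensional, so $s$ is pinned down up to the global scaling factor. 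Thus the map from extreme rays to subsets of size $2^n - n - 2$ of $\mathcal P_n$ is injective, giving the bound $\binom{n^2 2^n}{2^n}$.

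The one genuine subtlety — and the step I expect to require the most care — is the bookkeeping of dimensions: I must check that $2^n - n - 2$ is the correct number of coordinate equalities, i.e.\ that an extreme ray of a $d$-dimensional cone lies on $d-1$ linearly independent facet-defining hyperplanes, and that here the only facets are the $s_{I,J} \ge 0$. Since \cref{sec:supermodular-analysis} establishes that $\im T$ has dimension $2^n - n - 1$ and that the relevant cone is exactly $\im T \cap \R_{\ge 0}^{\mathcal P_n}$, this is immediate: an extreme ray of a pointed cone of dimension $d = 2^n - n - 1$ is the solution set of $d - 1 = 2^n - n - 2$ linearly independent active constraints among the $s_{I,J} \ge 0$. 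Everything else is the elementary inequality $\binom{n^2 2^n}{2^n - n - 2} \le \binom{n^2 2^n}{2^n}$, which I would dispatch in a line using monotonicity of $\binom{N}{k}$ in $k$ for $k \le N/2$ together with $2^n \le n^2 2^{n-1}$. No random matrix input is needed for this statement; that machinery is reserved for the complexity bound that follows.
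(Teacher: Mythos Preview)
Your proposal is correct and follows essentially the same argument as the paper: an irreducible $s$ lies on an extreme ray of the cone $\im T\cap\R_{\ge 0}^{\mathcal P_n}$, hence is determined by a choice of $2^n-n-2$ coordinate equalities $s_{I,J}=0$, and the number of such choices is at most $\binom{\binom{n}{2}2^{n-2}}{2^n-n-2}\le\binom{n^2 2^n}{2^n}$. Your write-up is more explicit about the dimension bookkeeping and the binomial monotonicity than the paper's two-line version, but there is no substantive difference in method.
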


\begin{proof}
For each irreducible $s$, take $2^n-n-2$ linearly independent conditions of the form $s_{I,J}=0$ that determine $s$. Thus the number of possible irreducible $s$ is at most $$\binom{\binom{n}{2}2^{n-2}}{2^n-n-2}\le \binom{n^2 2^n}{2^n}$$
\end{proof}

This bound is clearly not tight. However, as we see in \cref{sec:lower-bounds} the true growth rate is in fact exponential in $2^n$.

As in the previous section, we can also obtain a bound on complexity. In this case, we define the \emph{complexity} of an irreducible supermodular $f$ as the maximum weight of a color of $s = Tf$ (defined in \cref{thm:path-sum}) after scaling $s$ to have relatively prime integer entries.

\begin{theorem}
Let $f$ be an irreducible supermodular function. Then the complexity of $f$ is at most $2^{n^22^n}$.
\end{theorem}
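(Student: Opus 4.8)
The plan is to mimic the proof of \cref{thm:balanced-vector-complexity}, using the linear-algebraic structure established in \cref{thm:path-sum}. Recall that an irreducible supermodular $f$ corresponds (via $s = Tf$) to an extreme ray of the cone $\{s \ge 0\} \cap \im T$, where $\im T$ has dimension $2^n - n - 1$. So after scaling $s$ to have relatively prime integer entries, the vector $s$ is the unique (up to scaling) solution to a system consisting of: (i) the $\binom{n}{2}2^{n-2} - 2^n + n + 1$ equality conditions cutting out $\im T$, which by \cref{thm:path-sum}(3) each have the form $s_{I,J} + s_{I',J'} - s_{I'',J''} - s_{I''',J'''} = 0$; together with (ii) some $2^n - n - 2$ further conditions of the form $s_{I,J} = 0$ coming from the extreme ray being tight on those inequalities. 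The support of $s$ — the set of close pairs where $s_{I,J} > 0$ — has size at most $2^n - n - 1$, since these coordinates are determined by a full-rank square system once we restrict attention to the support.

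First I would set up the restricted linear system: let $\mathcal{S} = \{\{I,J\} : s_{I,J} > 0\}$ be the support, with $|\mathcal{S}| = d \le 2^n - n - 1$. Restricting the equality conditions (i) to coordinates in $\mathcal{S}$ gives a homogeneous system whose solution space is spanned by $s|_{\mathcal{S}}$; its coefficient matrix $M$ has rank $d-1$, entries in $\{0, \pm 1, \pm 2\}$ (a coordinate could appear in two of the four slots of one relation, though generically entries are in $\{0,\pm1\}$), and at most a bounded number of nonzeros per row. Then I would pick one coordinate $\{I_0, J_0\}$ of the support, augment $M$ by one row that is $1$ in that coordinate and $0$ elsewhere to get a $d \times d$ matrix $A$ of full rank, and solve $A\, (s|_{\mathcal{S}}) = s_{I_0,J_0}\, e$ where $e$ is the last standard basis vector. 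By Cramer's rule, $s_{I,J} = s_{I_0,J_0} \cdot \det(A_{\{I,J\}}) / \det(A)$ where $A_{\{I,J\}}$ is $A$ with the corresponding column replaced by $e$. Since the $s_{I,J}$ are relatively prime positive integers, taking $\gcd$ over the support forces $s_{I_0,J_0} \le \det A$ (up to sign), hence every entry of $s$ is at most $\det A$ in absolute value. The complexity of $f$, being the weight $m_i$ of a color, equals a path sum $P_\sigma(s) = \sum_{r=1}^{n-1} s_{I_r, J_r}$, which is a sum of at most $n-1$ entries of $s$, hence at most $(n-1)\det A$.

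Next I would bound $\det A$. The matrix $A$ is $d \times d$ with $d \le 2^n - n - 1 < 2^n$, entries bounded by $2$ in absolute value, and at most $O(1)$ nonzero entries per row (four from each relation, plus the augmented row). By Hadamard's inequality applied rowwise, $|\det A| \le \prod_{\text{rows}} \|\text{row}\|_2 \le (\sqrt{4 \cdot O(1)})^d = C^d$ for an absolute constant $C$, which is already $2^{O(2^n)}$ — actually stronger than the claimed bound. To land exactly on the stated $2^{n^2 2^n}$, the crude estimate $|\det A| \le (\max|\text{entry}| \cdot d)^d \le (2 \cdot 2^n)^{2^n} = 2^{(n+1)2^n} \le 2^{n^2 2^n}$ for $n \ge 2$ suffices, and then $(n-1)\det A \le n \cdot 2^{(n+1)2^n} \le 2^{n^2 2^n}$, so I would just invoke that.

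The main obstacle is bookkeeping around the augmented system: I must be careful that the rank of the restricted equality matrix $M$ is exactly $d - 1$ (not smaller), so that adding one generic coordinate-pinning row yields an invertible $A$ and a genuine Cramer's-rule expression. This is where I would lean on irreducibility: extreme-ray-ness guarantees the full system (equalities plus the $s_{I,J}=0$ conditions) has a one-dimensional solution space, and since $s$ vanishes off $\mathcal{S}$, the restricted equality system must already have a one-dimensional solution space, i.e.\ corank $1$ on the support. I would also note the minor point that entries of $M$ may be $\pm 2$ when a single element index coincides in two of the four sets appearing in a \cref{thm:path-sum}(3) relation, but this only changes the constant in the Hadamard bound and is absorbed by the slack in $2^{n^2 2^n}$.
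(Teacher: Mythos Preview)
Your approach is correct and follows the same Cramer's rule plus Hadamard strategy as the paper, but with one structural difference worth noting. The paper works in the full ambient space $\R^{\mathcal{P}_n}$: its square matrix $A$ has size $\binom{n}{2}2^{n-2}$, with rows consisting of the $\im T$ relations, then $2^n-n-2$ rows enforcing $s_{I,J}=0$, and a final all-ones row for normalization; Hadamard then gives $|\det A|\le 2^{\binom{n}{2}2^{n-2}-(2^n-n-2)}\cdot n2^{n/2}$, which is where the stated $2^{n^22^n}$ comes from. You instead restrict to the support $\mathcal{S}$ of $s$, obtaining a $d\times d$ system with $d\le 2^n-n-1$, and your Hadamard bound is consequently much smaller, $2^{O(2^n)}$ rather than $2^{O(n^22^n)}$---so your argument actually proves more than the theorem claims, and you then throw this away to match the stated bound.

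Two minor points. First, your worry about entries $\pm 2$ is unnecessary: in each relation of \cref{thm:path-sum}(3) the four close pairs are pairwise distinct (two lie in $\PP_{n,|I|+1}$ and two in $\PP_{n,|I|+2}$, and within each level the pairs differ since $i,j,k$ are distinct), so the coefficient matrix has entries in $\{0,\pm 1\}$. Second, your ``hence every entry of $s$ is at most $\det A$'' is slightly loose as stated---a cofactor can exceed the determinant---but the intended bound $|s_{I,J}|\le |C_{d,j}|\le 2^{d-1}$ via Hadamard on the $(d-1)\times(d-1)$ minor is correct, and your justification that the restricted system has corank exactly $1$ on the support (because $s$ lies in the relative interior of a one-dimensional face) is the right argument.
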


\begin{proof}
As in the proof of \cref{thm:balanced-vector-complexity}, we consider solving for $s$. Pick $2^n-n-2$ pairs ${I,J}$ with $s_{I,J}=0$ which allow us to determine $s$. Now consider the matrix $A$ in which the top $\binom{n}{2}2^{n-2}-(2^n-n-2)$ rows determine the subspace $\im T$, the next $2^n-n-2$ rows enforce $s_{I,J}=0$, and the last row has all $1$'s to account for scaling. For an irreducible $s$ with integer entries, we have that $As$ is a vector with all $0$ entries except the last entry.

As in the proof of \cref{thm:balanced-vector-complexity}, it suffices to bound the determinant of $A$. The first $\binom{n}{2}2^{n-2}-(2^n-n-2)$ rows each have norm $2$. The next $2^n-n-2$ rows have norm $1$. The last row has norm at most $n2^{n/2}$. So by Hadamard's inequality we have that the determinant of $A$ is at most $$2^{\binom{n}{2}2^{n-2}-(2^n-n-2)}n2^{n/2}\le 2^{n^22^n}.$$
\end{proof}

% Again, we expect some sort double exponential behavior, and will continue thinking about a lower bound. 

\section{Lower bounds for irreducible supermodular functions}\label{sec:lower-bounds}

In this section we prove a double-exponential lower bound on the number of irreducible supermodular functions. We do this by relating a special class of supermodular functions to matroids.

\begin{definition}
We say that a supermodular function $f$ is \emph{simple} if the nondecreasing function $\partial_if$ is irreducible or constant for each $i\in[n]$.
\end{definition}

\begin{lemma}\label{lem:01-irreducibility}
    Let $f$ be a simple supermodular function. If $f$ is not irreducible, then there exists a partition $[n]=S_1\cup S_2$ and simple functions $g_1\colon 2^{S_1}\to\R$ and $g_2\colon 2^{S_2}\to\R$ such that $f(I)=g_1(I\cap S_1)+g_2(I\cap S_2)$ for all $I$.
\end{lemma}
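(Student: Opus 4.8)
The plan is to exploit the "simple" hypothesis together with the structure theorem for irreducible nondecreasing functions (the Lemma characterizing them as $cu_{\A}$) and the compatibility constraint $\partial_i\partial_j f = \partial_j\partial_i f$. Since $f$ is simple, for each $i$ the function $\partial_i f\colon 2^{[n]\setminus\{i\}}\to\R$ is either constant or of the form $c_i u_{\A_i}$ for a nonempty antichain $\A_i$ of subsets of $[n]\setminus\{i\}$. If every $\partial_i f$ is constant then $f$ is modular, which is excluded, so at least one $\A_i$ is genuinely present. The first step is to understand how the antichains $\A_i$ for different $i$ interact through the identity $\partial_i\partial_j f=\partial_j\partial_i f$: applying $\partial_j$ to $c_i u_{\A_i}$ and $\partial_i$ to $c_j u_{\A_j}$ and setting them equal severely restricts the possibilities. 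Concretely, $\partial_j u_{\A_i}(I)$ is the indicator of the event that $I\cup\{j\}$ is in the up-set of $\A_i$ but $I$ is not, i.e.\ that adding $j$ first crosses some member of $\A_i$; I would analyze which sets $I$ this picks out and match it against the analogous description coming from $\A_j$.

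The second step is to use the reducibility assumption. Since $f$ is not irreducible, write $f=h_1+h_2$ with $h_1,h_2$ supermodular, neither equivalent to a multiple of $f$. Then $\partial_i f = \partial_i h_1 + \partial_i h_2$ with both summands nondecreasing. Because $\partial_i f$ is either constant or an irreducible nondecreasing function $c_i u_{\A_i}$, and $u_{\A_i}$ is irreducible (by the Lemma), each $\partial_i h_k$ must be a (nonnegative, possibly zero) multiple of $u_{\A_i}$ plus a constant — i.e.\ $\partial_i h_k$ and $\partial_i f$ are proportional up to constants, with the same proportionality constant $\lambda_{i,k}\in[0,1]$ for the $u_{\A_i}$-part (and $\lambda_{i,1}+\lambda_{i,2}=1$). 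The key tension to extract: if the ratio $\lambda_{i,1}$ were the same for all $i$ with $\A_i$ nontrivial, then $h_1$ and $h_2$ would (after subtracting modular corrections) be proportional to $f$, contradicting irreducibility of the decomposition; so there must exist indices $i$ and $j$ with $\lambda_{i,1}\neq\lambda_{j,1}$.

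The third step is to turn this mismatch of ratios into a partition. Define a relation on the index set $\{i : \A_i \text{ nontrivial}\}$ by declaring $i\sim j$ if $\lambda_{i,1}=\lambda_{j,1}$ is forced — and show via the compatibility identity $\partial_i\partial_j f=\partial_j\partial_i f$ that whenever $\partial_i\partial_j f\neq 0$ the ratios must agree. Hence distinct "ratio classes" correspond to index blocks whose cross second-derivatives vanish. Let $S_1$ be the union of the blocks with one ratio value (together with the "constant" indices assigned appropriately) and $S_2$ the complement; then $\partial_i\partial_j f = 0$ for all $i\in S_1$, $j\in S_2$. Vanishing of all mixed second derivatives across the partition is exactly the statement that $f(I)$ depends on $I\cap S_1$ and $I\cap S_2$ separately — formally, $f(I) = f(I\cap S_1) + f(I\cap S_2) - f(\varnothing)$ — and normalizing $f(\varnothing)=0$ gives $g_1(X) = f(X)$ for $X\subseteq S_1$ and $g_2(Y)=f(Y)$ for $Y\subseteq S_2$. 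One checks $g_1,g_2$ are supermodular (restrictions/sections of a supermodular function) and simple ($\partial_i g_1 = \partial_i f$ restricted, still irreducible or constant). Finally one must rule out the degenerate partition: if $S_2=\varnothing$ then all nontrivial $\A_i$ share a ratio, which we showed contradicts the reducibility of $f$; so the partition is genuine.

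The main obstacle I anticipate is the second step — pinning down exactly why a uniform ratio $\lambda_{i,1}$ across all relevant $i$ forces $h_1\sim cf$ (one must reconstruct $h_1$ from its derivatives $\partial_i h_1$, which requires the derivatives to be globally consistent, and argue the constants of integration only contribute a modular function) — and, intertwined with it, verifying that the compatibility identity genuinely propagates equality of ratios along nonzero mixed derivatives rather than merely along some weaker connectivity. Handling the indices $i$ with $\partial_i f$ constant (which carry no ratio information and must be slotted into whichever block keeps the mixed derivatives zero) is a bookkeeping nuisance but not a real difficulty.
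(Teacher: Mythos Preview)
Your proposal is correct and follows the same skeleton as the paper's argument---start from a nontrivial decomposition $f=h_1+h_2$, apply $\partial_i$ to both sides, and use irreducibility of $\partial_i f$ to control the pieces---but you are considerably more careful than the paper, and rightly so. The paper's proof simply asserts that ``one of $\partial_i g_1$ and $\partial_i g_2$ is $0$''; this does not follow from irreducibility of $\partial_i f$ and is in fact false for a generic decomposition (for instance, if $f=g_1+g_2$ is a valid partition-type decomposition, then $f=(\tfrac13 g_1+\tfrac23 g_2)+(\tfrac23 g_1+\tfrac13 g_2)$ is another nontrivial decomposition in which neither partial derivative vanishes). Your introduction of the ratios $\lambda_{i,k}$ and the propagation step $\partial_j\partial_i h_1=\lambda_{i,1}\,\partial_i\partial_j f=\lambda_{j,1}\,\partial_i\partial_j f$ is exactly what is needed to repair this: it shows that whenever $\partial_i\partial_j f\not\equiv 0$ the ratios agree, so the ``interaction graph'' on $[n]$ (edges where the mixed second derivative is nonzero) must be disconnected, and any connected component gives the partition.

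Two small remarks. First, the obstacle you flag in step two is not really an obstacle: if all $\lambda_{i,1}$ coincide (say equal to $\lambda$), then for every $i$ the function $\partial_i(h_1-\lambda f)$ is constant (for ``trivial'' $i$ this is because a nondecreasing function summing with another nondecreasing function to a constant must itself be constant), hence every second partial of $h_1-\lambda f$ vanishes and $h_1-\lambda f$ is modular. Second, once you know the interaction graph is disconnected you need not group blocks by ratio value at all: taking $S_1$ to be any single component and $S_2=[n]\setminus S_1$ already gives $\partial_i\partial_j f\equiv 0$ across the cut, and the indices with $\partial_i f$ constant are isolated vertices that may be placed on either side.
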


\begin{proof}
    Suppose that $f=g_1+g_2$. WLOG each of $f,g_1,g_2$ are standard. For each $i$, $\partial_if=\partial_ig_1+\partial_ig_2$. Since $\partial_if$ is irreducible as a nondecreasing function, one of $\partial_ig_1$ and $\partial_ig_2$ is $0$. Then we can take $S_1=\{i\colon \partial_ig_2=0\}$ and $S_2=[n]\setminus S_1$.
\end{proof}

Recall that a \emph{matroid} $M$ consists of a ground set of elements $E$ and a collection of bases $\B\subseteq 2^E$ satisfying the following exchange axiom: for each pair of bases $A,B\in\B$ and $a\in A\setminus B$, there exists $b\in B\setminus A$ such that $A\cup\{b\}\setminus\{a\}\in\B$.

We will work with matroids $M$ on the ground set $E=[n]$. Recall that each matroid has a rank $r$ which is the size of every base. Additionally, we can define a rank function on subsets $I$ of $E$ by $\rank(I)=\min_{B\in\B}\abs{I\cap B}$. We also have the nullity function given by $\nullity(I)=\abs{I}-\rank(I)$. We say that $I$ is an independent set if $\nullity(I)=0$.

Recall that a loop is an element of $E$ that is in no bases, and a coloop is an element that is in every base. We also say that a matroid $M$ is \emph{reducible} is we can partition the ground set $E$ into two nonempty sets $E_1$ and $E_2$ and construct matroids $M_i$ on $E_i$ with bases $\B_i$ such that $$\B=\{B_1\cup B_2\colon B_1\in \B_1, B_2\in \B_2\}.$$
If there does not exist such a decomposition of $M$, we say that $M$ is \emph{irreducible}. Note that if a matroid on a ground set of at least $2$ elements has a loop or a coloop, then it is reducible.

Next, we define a polytope associated with a matroid. Given a matroid $M$ on $[n]$, its matroid polytope is the convex hull of the indicator vectors for its bases. We use the following result of Gelfand, Goresky, MacPherson, and Serganov \cite{matroid-polytope-equivalence}.

\begin{theorem}[Gelfand, Goresky, MacPherson, and Serganov]\label{thm:matroid-polytope}
Let $P$ be a polytope in $\R^n$ with vertices in $\{0,1\}^n$ such that each edge is a translate of $e_i-e_j$ for some $i,j\in[n]$. Then $P$ is the matroid polytope of some matroid $M$ on $[n]$.
\end{theorem}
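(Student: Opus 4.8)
The plan is to show directly that the vertex set of $P$, regarded via indicator vectors as a family $\mathcal{B}$ of subsets of $[n]$, is the basis system of a matroid on $[n]$; then $P$ is by definition its matroid polytope. First I would check equicardinality: each edge direction $e_i-e_j$ has coordinate sum $0$, so the functional $x\mapsto\sum_i x_i$ is constant along every edge of $P$, hence — the $1$-skeleton of a polytope being connected — constant on all vertices of $P$, say equal to $r$; thus $|B|=r$ for every $B\in\mathcal{B}$.

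It then remains to verify the exchange axiom: given $A,B\in\mathcal{B}$ and $a\in A\setminus B$, produce $b\in B\setminus A$ with $(A\setminus\{a\})\cup\{b\}\in\mathcal{B}$. I would first reduce to a convenient face. Let $F$ be the minimal face of $P$ containing the midpoint $\tfrac{1}{2}(1_A+1_B)$; then $A$ and $B$ are vertices of $F$, and $\tfrac{1}{2}(1_A+1_B)$ lies in the relative interior of $F$. A face of $P$ again has vertices in $\{0,1\}^n$ and edges parallel to coordinate differences, and a point of $\{0,1\}^n$ that is a vertex of $F$ is automatically a vertex of $P$; so it suffices to find $b$ working inside $F$, i.e.\ we may assume $\tfrac{1}{2}(1_A+1_B)$ lies in the relative interior of $P$ itself.

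Now I would use the polyhedral fact that the support cone of $P$ at the vertex $1_A$ is generated by the edge directions of $P$ at $1_A$. Each such direction is some $e_i-e_j$; since near $1_A$ the $i$-coordinate cannot exceed $1$ and the $j$-coordinate cannot drop below $0$, necessarily $i\notin A$ and $j\in A$, and the far endpoint of the edge is $1_{(A\setminus\{j\})\cup\{i\}}$, so $(A\setminus\{j\})\cup\{i\}\in\mathcal{B}$. Because $\tfrac{1}{2}(1_A+1_B)$ is interior, the direction $1_B-1_A$ lies in this cone, so $1_B-1_A=\sum_k\lambda_k(e_{i_k}-e_{j_k})$ with all $\lambda_k>0$, each $i_k\notin A$, $j_k\in A$, and $(A\setminus\{j_k\})\cup\{i_k\}\in\mathcal{B}$. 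Comparing the coefficient of each $e_\ell$ — which in $1_B-1_A$ equals $+1$ for $\ell\in B\setminus A$, $-1$ for $\ell\in A\setminus B$, and $0$ otherwise — and using $\lambda_k>0$, I get that every $i_k\in B\setminus A$, every $j_k\in A\setminus B$, and, since the coefficient of $e_a$ is $-1\neq 0$, that $j_k=a$ for at least one $k$. For that $k$, $b:=i_k$ lies in $B\setminus A$ and $(A\setminus\{a\})\cup\{b\}\in\mathcal{B}$, completing the exchange axiom. Together with equicardinality, this shows $\mathcal{B}$ is the basis system of a matroid $M$ on $[n]$, so $P$ is its matroid polytope.

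The main obstacle I anticipate is the polyhedral input: that the support (tangent) cone at a vertex of a polytope is generated by the edges through that vertex, and that the minimal face containing a point has that point in its relative interior. These are standard but need care; once they are available, the only real work is the sign bookkeeping that pins the cone decomposition of $1_B-1_A$ down to a term that removes the prescribed element $a$ rather than some other element of $A\setminus B$.
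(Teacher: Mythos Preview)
The paper does not prove this theorem: it is quoted as a result of Gelfand, Goresky, MacPherson, and Serganova and invoked via the citation \cite{matroid-polytope-equivalence}, with no argument given. So there is nothing in the paper to compare against.

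That said, your argument is correct and is essentially the standard proof of this classical characterization. Two small remarks. First, the reduction to the minimal face $F$ containing $\tfrac12(1_A+1_B)$ is unnecessary: for \emph{any} polytope and any two vertices $1_A,1_B$, the vector $1_B-1_A$ already lies in the support cone at $1_A$ (since $P-1_A$ is contained in that cone), with no interiority hypothesis required; the rest of your sign bookkeeping then goes through verbatim. Second, the assertion that the far endpoint of an edge at $1_A$ is $1_{(A\setminus\{j\})\cup\{i\}}$ only needs the edge to be \emph{parallel} to some $e_i-e_j$, since both endpoints lie in $\{0,1\}^n$ and hence differ by exactly $\pm(e_i-e_j)$. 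The polyhedral inputs you flag---connectivity of the $1$-skeleton, and that the feasible cone at a vertex is generated by the incident edge directions---are standard and pose no real obstacle.
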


\begin{theorem}
    There exists a bijection between equivalence classes of simple supermodular functions and loopless matroids on $[n]$. Furthermore, irreducible functions correspond to irreducible matroids.

    % Irreducible supermodular functions $f\colon 2^{[n]}\to \R$ with $\partial_if(I)\in \{0,1\}$ for all $i\in[n]$ and $I\subseteq[n]\setminus\{i\}$ are in bijection with irreducible matroids on $[n]$ of rank at most $n-1$.
\end{theorem}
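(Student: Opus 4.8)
The plan is to exhibit the bijection explicitly and then track how direct-sum decompositions on the matroid side correspond to supermodular decompositions on the function side.

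\textbf{The forward map.} Given a loopless matroid $M$ on $[n]$ with rank function $\rho$, set $f_M := \nullity_M$, so $f_M(I) = \abs{I} - \rho(I)$. I would first check $f_M$ is a legitimate standard supermodular function: $f_M(\emptyset) = 0$ and $f_M(\{i\}) = 1 - \rho(\{i\}) = 0$ since $M$ is loopless, so $f_M$ vanishes on sets of size at most $1$; and $f_M$ is supermodular since $\rho$ is submodular and $\abs{\cdot}$ is modular. To see $f_M$ is simple, compute $(\partial_i f_M)(I) = 1 - \bigl(\rho(I\cup\{i\}) - \rho(I)\bigr)$, which equals $1$ when $i$ lies in the span of $I$ in $M$ and $0$ otherwise. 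The family of sets $I \subseteq [n]\setminus\{i\}$ spanning $i$ is upward closed, so $\partial_i f_M = u_{\mathcal{A}_i}$ for the antichain $\mathcal{A}_i$ of its minimal elements (and $\partial_i f_M \equiv 0$ precisely when $i$ is a coloop). In either case $\partial_i f_M$ is irreducible or constant, so $f_M$ is simple.

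\textbf{The inverse map.} Given a simple standard supermodular $f$, I would first pass to the representative in which every $\partial_i f$ is $\{0,1\}$-valued: each $\partial_i f$ is either $\equiv 0$ or of the form $c_i u_{\mathcal{A}_i}$ with $c_i > 0$, and the identity $\partial_i \partial_j f = \partial_j \partial_i f$ forces $c_i = c_j$ whenever $\partial_i \partial_j f \not\equiv 0$, so for an irreducible $f$ all these scales coincide (the graph with an edge $ij$ whenever $\partial_i\partial_j f\not\equiv 0$ is connected, by \cref{lem:01-irreducibility}) and dividing by the common scale gives the required representative. Now set $\rho(I) := \abs{I} - f(I)$: then $\rho(\emptyset)=0$; $\rho$ is submodular because $-f$ is; $\rho(I\cup\{i\}) - \rho(I) = 1 - (\partial_i f)(I) \in \{0,1\}$ since $\partial_i f$ is $\{0,1\}$-valued and $f$ is nondecreasing (standard supermodular functions are nondecreasing, as $(\partial_i f)(I) \ge (\partial_i f)(\emptyset) = 0$ using that $\partial_i f$ is nondecreasing); and $\rho(\{i\}) = 1$, so $\rho$ is the rank function of a loopless matroid $M_f$ on $[n]$. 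Equivalently, and more cleanly, the polytope $P_f = \{x : x\cdot 1_I \ge f(I)\text{ for all }I,\ x\cdot 1_{[n]} = f([n])\}$ is a generalized permutohedron, so all its edges are translates of vectors $e_i - e_j$; under the normalization its vertices lie in $\{0,1\}^n$, so \cref{thm:matroid-polytope} realizes $P_f$ as the matroid polytope of $M_f$. In both directions the formula $\rho_M = \abs{\cdot} - f_M$ holds, so $f_{M_f} = f$ and $M_{f_M} = M$, and the two maps are mutually inverse.

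\textbf{Irreducibility.} If $M = M_1 \oplus M_2$ for matroids on a partition $[n] = S_1 \cup S_2$ into nonempty parts, then $\rho_M(I) = \rho_{M_1}(I\cap S_1) + \rho_{M_2}(I\cap S_2)$ gives $f_M = g_1 + g_2$ with $g_k(I) := \nullity_{M_k}(I\cap S_k)$ supermodular and not proportional to $f_M$, so $f_M$ is reducible. Conversely, if $f_M$ is reducible then, $f_M$ being simple, \cref{lem:01-irreducibility} yields a partition $[n] = S_1 \cup S_2$ into nonempty parts (nonempty because a block on which $f_M$ is constant would make the corresponding summand modular and the decomposition trivial) and simple $g_1$ on $2^{S_1}$, $g_2$ on $2^{S_2}$ with $f_M(I) = g_1(I\cap S_1) + g_2(I\cap S_2)$; feeding $g_1$ and $g_2$ into the inverse map produces matroids $M_1, M_2$ with $\rho_M(I) = \rho_{M_1}(I\cap S_1) + \rho_{M_2}(I\cap S_2)$, so $M = M_1 \oplus M_2$. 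Hence $f_M$ is irreducible if and only if $M$ is.

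\textbf{Expected main obstacle.} The subtle point is the normalization combined with the asymmetric role of loops: a simple function is only pinned down up to a positive scale on each connected block, and the looplessness condition — which is exactly what makes $f_M$ standard — is not symmetric under matroid duality, so I would need to check carefully that the inverse map genuinely lands among loopless matroids and that the boundary cases (coloops, equivalently blocks on which the function is constant) are handled so that the two notions of ``reducible'' match in both directions. I expect this to require an auxiliary lemma stating that each irreducible simple supermodular function has a well-defined primitive $\{0,1\}$-derivative representative and that the converse of \cref{lem:01-irreducibility} holds; granting that, the correspondence above goes through.
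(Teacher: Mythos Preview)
Your proposal is essentially the paper's argument. The forward map via $f_M=\nullity_M$ and the appeal to \cref{lem:01-irreducibility} for the irreducibility correspondence are exactly what the paper does; your backward map via the generalized permutohedron $P_f$ and \cref{thm:matroid-polytope} is also the paper's route (the paper shows directly that every vertex of $P_f$ lies in $\{0,1\}^n$ from $0=f(\{i\})\le x_i\le f([n])-f([n]\setminus\{i\})\in\{0,1\}$, then invokes \cref{thm:matroid-polytope}).

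Where you go a bit beyond the paper is in offering the alternative elementary verification that $\rho:=\abs{\cdot}-f$ satisfies the rank axioms, bypassing \cref{thm:matroid-polytope} entirely. This is a genuine simplification: it avoids the geometric input and makes the bijection purely combinatorial. The paper does not take this shortcut.

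Your ``expected main obstacle'' is well spotted. The paper's proof simply asserts ``since $f$ takes integer values'' for the standard representative, which really only pins down a canonical scale on an irreducible (connected) piece; your observation that $\partial_i\partial_j f=\partial_j\partial_i f$ forces $c_i=c_j$ on each connected component, together with \cref{lem:01-irreducibility}, is exactly the missing justification, and neither you nor the paper fully resolves the reducible case (where different components could in principle carry different scales). For the application to counting irreducible objects this does not matter, but for the bijection as stated it does; you are right to flag it.
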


\begin{proof}
    Let $M$ be a matroid on $[n]$. Then we can construct a supermodular function $f$ by defining $f(I)=\nullity(I)$. Since $M$ is loopless, we have $\nullity(I)=0$ for $\abs{I}\le1$. So, $f$ is the standard representative for its equivalence class. Additionally $f$ is simple since for any $i\in [n]$, $\partial_if$ takes values in $\{0,1\}$.
    
    Now suppose we have a simple function $f$ that is the standard representative for its equivalence class. We will construct a matroid $M$ on $[n]$ with $f(I)=\nullity(I)$. Consider the generalized permutohedron $P$ corresponding to $f$. Let $x$ be a vertex of $P$. Since $f$ takes integer values, $x$ must have integer entries (otherwise we could perturb $x$ along a line while keeping it inside $P$). We have $x_i\ge f(\{i\})=0$, and $x_i\le f([n])-f([n]\setminus\{i\})\in\{0,1\}$. So, $x_i\in\{0,1\}$.

    % and let $s=Tf$ as in \ref{sec:prelimaries}. Since $f$ is simple and standard, $\partial_if$ only takes values in $\{0,1\}$. So, for each close pair $\{I,J\}$, $S_{I,J}\le \{0,1\}$.
    
    Now, consider an edge of $P$. It is parallel to $e_i-e_j$ for some $i,j\in[n]$. Since the vertices of $P$ are in $\{0,1\}^n$, the edge must be a translate of $e_i-e_j$. Thus by \cref{thm:matroid-polytope} we have that $P$ is the matroid polytope of a matroid $M$. This gives that $M$ maps to $f$ by our map above.
    
    By \cref{lem:01-irreducibility}, a matroid $M$ is irreducible exactly when the corresponding supermodular function $f$ is irreducible.
\end{proof}

Let $m_n$ be the number of matroids on $[n]$. The following bounds for the asymptotics of $m_n$ are known (all logarithms are in base $2$).

\begin{theorem}[Bansal--Pendavingh--van der Pol \cite{matroids-upper-bound}]\label{thm:matroids-upper}
$$\log \log m_n \le n - \frac{3}{2} \log n + \frac{1}{2}\log\frac{2}{\pi}+1+o(1).$$
\end{theorem}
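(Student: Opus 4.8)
This is the theorem of Bansal, Pendavingh, and van der Pol \cite{matroids-upper-bound}, which we invoke as a black box; here we only describe the shape of the argument. The plan has three parts: reduce to a fixed rank, dispatch the extreme ranks trivially, and prove a compact encoding of matroids of rank near $n/2$, which is where all the difficulty lies.

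Let $m_{n,r}$ be the number of matroids of rank $r$ on $[n]$, so that $m_n=\sum_{r=0}^n m_{n,r}\le (n+1)\max_r m_{n,r}$. Since $\log m_n$ is of order $2^{\Theta(n)}$, multiplying by a polynomial in $n$ changes $\log\log m_n$ by only $o(1)$, so it suffices to bound $\log m_{n,r}$ for each $r$. Once $|r-n/2|$ exceeds a quantity of order $\sqrt{n\log n}$, the trivial bound $m_{n,r}\le 2^{\binom nr}$ already forces $\log\log m_{n,r}$ strictly below $n-\frac32\log n+O(1)$, so such ranks contribute nothing. The target is therefore to show, for $r$ close to $n/2$,
$$\log m_{n,r}\le (2+o(1))\,\frac{1}{n}\binom nr\le (2+o(1))\,\frac{1}{n}\binom n{\lfloor n/2\rfloor},$$
which, after using $\binom n{\lfloor n/2\rfloor}=(1+o(1))\,2^n\sqrt{2/(\pi n)}$ and taking $\log$ once more, is exactly the bound $n-\frac32\log n+\frac12\log\frac2\pi+1+o(1)$.

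The heart of the matter is the structural step. For a sparse paving matroid of rank $r$, the matroid is determined by its set of circuit--hyperplanes, a family of $r$-subsets of $[n]$ that pairwise intersect in at most $r-2$ elements — equivalently, a constant-weight binary code of length $n$, weight $r$, and minimum distance $4$ — and the number of such families is $2^{(1+o(1))\alpha}$, where $\alpha=\alpha\big(J(n,r)\big)$ is the independence number of the associated Johnson graph; a clique-covering (or design-theoretic) argument gives $\alpha=(1+o(1))\binom n{r-1}/r$, which equals $(2+o(1))\binom n{n/2}/n$ at $r\approx n/2$ and accounts for the constant in the theorem. The content of \cite{matroids-upper-bound} is that an \emph{arbitrary} matroid of rank $r$ is not essentially more expensive to describe: every such matroid is pinned down by a suitable ``cover'' — a small structured family of rank-deficient flats, drawn from its cyclic flats and each tagged with its rank, whose rank-deficient members rule out precisely the non-bases — and one shows that this cover can always be chosen efficiently enough that the number of possible covers is still $2^{(2+o(1))\binom nr/n}$. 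Counting covers then yields the displayed bound.

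The main obstacle is precisely this last point. A naive description — recording a matroid's restriction to every $(r+1)$-subset, or listing all its cyclic flats, or tracking how each new element attaches to the flats of the previous ground set — is far too large and produces only the older Piff-type bound $\log\log m_n\le n-\log n+O(\log\log n)$. Gaining the extra $\sqrt n$ factor in the double exponent, with the correct leading constant, requires genuinely exploiting the matroid axioms to show that the non-bases of any matroid, although their number can be as large as $\Theta\big(\binom nr\big)$ (for instance for binary projective geometries), are always ``generated'' by a cover no larger than what a sparse paving matroid needs. Finally, this upper bound is tight: Knuth's sparse paving matroids, being exactly the independent sets of $J(n,r)$, already number $2^{(2+o(1))\binom n{\lfloor n/2\rfloor}/n}$, so that $\log\log m_n\ge n-\frac32\log n+\frac12\log\frac2\pi+1-o(1)$ as well.
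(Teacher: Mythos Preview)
The paper does not prove this theorem at all; it is stated purely as a citation of \cite{matroids-upper-bound} and used as a black box in the subsequent lower-bound count. Your proposal does exactly the same thing --- you explicitly invoke the result as a black box --- so on that level the two agree completely, and the sketch you add of the Bansal--Pendavingh--van~der~Pol argument is extra content that the paper never attempts.

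One small correction to your final paragraph: you assert that Knuth's construction already gives $\log\log m_n\ge n-\tfrac32\log n+\tfrac12\log\tfrac2\pi+1-o(1)$, i.e.\ that the upper bound is tight including the ``$+1$''. That is not known. Knuth's lower bound (the paper's \cref{thm:matroids-lower}) is $n-\tfrac32\log n+\tfrac12\log\tfrac2\pi-o(1)$, without the $+1$; equivalently, the known bounds on $\log m_n$ differ by a factor of $2$. The Graham--Sloane bound gives independence number at least $\binom{n}{\lfloor n/2\rfloor}/n$ in the Johnson graph, not $(2+o(1))\binom{n}{\lfloor n/2\rfloor}/n$, and the number of independent sets is only shown to be at least $2^{\alpha}$, yielding the $(1+o(1))$ rather than $(2+o(1))$ in the exponent. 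Closing that gap is an open problem.
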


\begin{theorem}[Knuth \cite{matroids-lower-bound}]\label{thm:matroids-lower}
$$\log \log m_n \ge n - \frac{3}{2} \log n + \frac{1}{2}\log\frac{2}{\pi}-o(1).$$
\end{theorem}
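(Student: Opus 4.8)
The plan is to prove the lower bound by exhibiting a doubly-exponential family of \emph{sparse paving matroids} of rank $r=\lfloor n/2\rfloor$ on $[n]$, which reduces the problem to finding a large independent set in the ``middle'' Johnson graph.

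First I would recall the combinatorial description of sparse paving matroids. Writing $B-a+b$ for $(B\setminus\{a\})\cup\{b\}$, a family $\mathcal H\subseteq\binom{[n]}{r}$ is the set of non-bases of a matroid of rank $r$ on $[n]$ whenever any two distinct members $H,H'\in\mathcal H$ satisfy $\abs{H\cap H'}\le r-2$, equivalently $\abs{H\triangle H'}\ge 4$. One checks the basis-exchange axiom for $\mathcal B=\binom{[n]}{r}\setminus\mathcal H$: given $B_1,B_2\in\mathcal B$ and $a\in B_1\setminus B_2$, if no $b\in B_2\setminus B_1$ had $B_1-a+b\in\mathcal B$, then either $\abs{B_2\setminus B_1}\ge 2$, producing two members $B_1-a+b$ and $B_1-a+b'$ of $\mathcal H$ at symmetric difference $2$ (impossible), or $\abs{B_2\setminus B_1}=1$, forcing $B_2=B_1-a+b\in\mathcal H$ (also impossible). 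Since a matroid is determined by its set of bases, distinct such families give distinct matroids, so for \emph{any} family $\mathcal C\subseteq\binom{[n]}{r}$ in which every two members have symmetric difference at least $4$ we obtain $m_n\ge 2^{\abs{\mathcal C}}$ by letting $\mathcal H$ range over the subfamilies of $\mathcal C$. Such a $\mathcal C$ is exactly an independent set in the Johnson graph $J(n,r)$, whose vertices are the $r$-subsets of $[n]$ and whose edges join pairs differing by a single element swap.

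Next I would build a large independent set in $J(n,r)$ for $r=\lfloor n/2\rfloor$. The greedy bound only gives an independent set of size $\binom{n}{r}/(r(n-r)+1)=\Theta\!\bigl(\binom{n}{\lfloor n/2\rfloor}/n^2\bigr)$, a factor of $n$ too small. Instead, partition $\binom{[n]}{r}$ into the $n$ classes $\mathcal C_c=\{\,S\in\binom{[n]}{r}:\sum_{i\in S}i\equiv c\pmod n\,\}$; if $S'=S-i+j$ with $i\ne j$ then $\sum S'-\sum S=j-i\not\equiv 0\pmod n$ because $0<\abs{i-j}<n$, so each $\mathcal C_c$ is an independent set. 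By pigeonhole some class has size at least $\binom{n}{\lfloor n/2\rfloor}/n$, whence $m_n\ge 2^{\binom{n}{\lfloor n/2\rfloor}/n}$. Taking $\log\log$ and using the Stirling estimate $\log_2\binom{n}{\lfloor n/2\rfloor}=n-\frac{1}{2}\log_2 n+\frac{1}{2}\log_2\frac{2}{\pi}+o(1)$ gives
$$\log\log m_n\ \ge\ \log_2\binom{n}{\lfloor n/2\rfloor}-\log_2 n\ =\ n-\frac{3}{2}\log n+\frac{1}{2}\log\frac{2}{\pi}-o(1),$$
which is the claimed lower bound.

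I expect the second step to be the crux. The substantive content is not the mere existence of sparse paving matroids, but pushing the exponent up to $\binom{n}{\lfloor n/2\rfloor}/n$ rather than $\binom{n}{\lfloor n/2\rfloor}/n^2$, since that extra factor of $n$ is exactly what turns a bound with $-\frac{5}{2}\log n$ into the sharp $-\frac{3}{2}\log n$; the linear-residue construction is the clean source of this gain, and any weaker independent-set estimate would miss the stated constant. A secondary and routine matter is making the sparse-paving correspondence precise and observing that these matroids are loopless for large $n$ (no element can lie in every $r$-subset inside a subfamily of size at most $\binom{n}{\lfloor n/2\rfloor}/n$), so the same family simultaneously furnishes a lower bound on the number of simple supermodular functions.
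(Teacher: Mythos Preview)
The paper does not prove this theorem; it is simply quoted with a citation to Knuth and then used as a black box in the subsequent lower-bound argument. Your proposal is a correct and self-contained proof, and it is essentially Knuth's original argument: exhibit sparse paving matroids of rank $r=\lfloor n/2\rfloor$ by choosing the set of circuit-hyperplanes inside a large independent set of the Johnson graph $J(n,r)$, and build that independent set via the sum-residue partition (the same construction as the Graham--Sloane constant-weight code). Your verification of the basis-exchange axiom and the $\binom{n}{\lfloor n/2\rfloor}/n$ count are both right, and the Stirling step gives exactly the stated constant.

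One small phrasing issue in your closing remark: for an element $i$ to be a loop one needs every $r$-subset \emph{containing} $i$ to be a non-basis, and there are $\binom{n-1}{r-1}\sim\tfrac12\binom{n}{r}$ such subsets, far more than $|\mathcal H|\le\binom{n}{r}/n$; your parenthetical inverts the quantifier but the intended argument is fine. This looplessness observation is in any case not needed for the theorem as stated.
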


\begin{theorem}[Mayhew--Newman--Welsh--Whittle \cite{matroids-loopless}]\label{thm:matroids-loop-coloop}
The number of matroids on $[n]$ with a loop or a coloop is $o(m_n)$.
\end{theorem}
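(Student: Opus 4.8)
To prove this cited result, the plan is to reduce it to a \emph{local} growth estimate on the matroid counting sequence, namely $m_n\ge(1-o(1))\,n^{2}\,m_{n-1}$, and then to establish that estimate by counting single-element extensions.

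First I would bound the matroids with a loop or a coloop. Fix $i\in[n]$. If $i$ is a loop of a matroid $M$ on $[n]$, then $M$ is recovered from $M\setminus i$ (a matroid on $[n]\setminus\{i\}$) by re-adjoining $i$ as a loop, so there are exactly $m_{n-1}$ matroids on $[n]$ in which $i$ is a loop. Likewise, if $i$ is a coloop then $M=(M\setminus i)\oplus U_{1,1}$, so there are again exactly $m_{n-1}$ matroids on $[n]$ in which $i$ is a coloop. A union bound over the $n$ choices of $i$ gives at most $2n\,m_{n-1}$ matroids on $[n]$ with a loop or a coloop.

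Next I would prove $m_n\ge(1-o(1))\,n^{2}\,m_{n-1}$. Deleting the element $n$ sorts the matroids on $[n]$ according to the restriction $N=M\setminus n$, a matroid on $[n-1]$, and the fibre over $N$ is exactly the set of single-element extensions of $N$ by $n$; hence $m_n=\sum_{N}\#\{\text{single-element extensions of }N\}$, the sum over matroids $N$ on $[n-1]$. For each flat $F$ of $N$ one obtains a single-element extension by adjoining $n$ in general position within $F$ (the principal extension of $N$ relative to $F$), and these extensions are pairwise distinct, since the rank function of such an extension recovers $F$. So $N$ has at least as many single-element extensions as it has flats, and $N$ has at least $2^{\rank(N)}$ flats: for a basis $B$ of $N$, the flats spanned by the various subsets $S\subseteq B$ are distinct, as such a flat meets $B$ in precisely $S$. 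Hence every matroid $N$ on $[n-1]$ of rank at least $2\log_2 n$ has at least $n^{2}$ single-element extensions. Finally, the matroids on $[n-1]$ of rank below $2\log_2 n$ form a vanishing fraction: a rank-$r$ matroid is determined by its family of bases, which is a subfamily of the $\binom{n-1}{r}$ $r$-subsets of $[n-1]$, so there are at most $\sum_{r<2\log_2 n}\binom{n-1}{r}2^{\binom{n-1}{r}}=2^{2^{O((\log n)^{2})}}$ of them, which is $o(m_{n-1})$ in view of the crude lower bound $m_{n-1}=2^{2^{\Omega(n)}}$ from \cref{thm:matroids-lower}. Combining, $m_n\ge n^{2}(1-o(1))\,m_{n-1}$, so $2n\,m_{n-1}\le\tfrac{2}{n}(1+o(1))\,m_n=o(m_n)$; together with the previous paragraph this proves the theorem.

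I expect the main difficulty to be the growth estimate $m_n/m_{n-1}\to\infty$ itself. It cannot be obtained by merely dividing the known double-exponential bounds $2^{\Omega(2^n/n^{3/2})}\le m_n\le 2^{O(2^n/n^{3/2})}$: the constants concealed in these $\log\log$-asymptotics differ by a factor of two, so the ratio they yield is consistent with $m_n/m_{n-1}$ remaining bounded. What is needed is a genuinely local comparison of $m_n$ with $m_{n-1}$, and single-element extensions supply one; the two points to verify are that a matroid of not-too-small rank has many flats (hence many extensions), and that the low-rank matroids are a negligible fraction of all matroids on $[n-1]$.
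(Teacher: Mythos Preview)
The paper does not give its own proof of this statement: it is quoted as a known theorem of Mayhew--Newman--Welsh--Whittle and used as a black box in the next theorem. So there is no in-paper argument to compare against.

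That said, your proposal is a correct and complete proof. The reduction to showing $m_n\ge(1-o(1))\,n^{2}m_{n-1}$ via the bound $2n\,m_{n-1}$ on matroids with a distinguished loop or coloop is standard and sound. Your extension-counting step is also fine: single-element extensions of $N$ are parametrised by modular cuts, and the principal modular cut above a flat $F$ has $F$ as its unique minimal element, so distinct flats give distinct extensions; the inequality $\#\{\text{flats of }N\}\ge 2^{\rank N}$ via closures of subsets of a fixed basis is correct, as is the crude $2^{2^{O((\log n)^2)}}$ bound on matroids of rank below $2\log_2 n$, which is negligible against $m_{n-1}=2^{2^{\Omega(n)}}$ from \cref{thm:matroids-lower}. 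This is, in fact, essentially the argument used in the cited reference: Mayhew--Newman--Welsh--Whittle also prove $m_n/m_{n-1}\to\infty$ by lower-bounding the number of single-element extensions of a typical matroid. Your remark that the gap between the constants in \cref{thm:matroids-upper} and \cref{thm:matroids-lower} prevents one from simply dividing the global bounds is exactly the point; a local comparison is required, and you have supplied one.
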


Using these results, we easily obtain the following.

\begin{theorem}
    There are at least $2^{\left(\sqrt{2/\pi}-o(1)\right)2^n/n^{3/2}}$ irreducible supermodular functions.
\end{theorem}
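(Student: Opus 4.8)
The plan is to push the lower bound through the bijection established just above: equivalence classes of simple supermodular functions correspond to loopless matroids on $[n]$, and irreducible functions correspond to irreducible matroids. Since an irreducible simple supermodular function is in particular an irreducible supermodular function, it suffices to bound from below the number of irreducible loopless matroids on $[n]$. For $n\ge 2$, an irreducible matroid cannot have a loop or a coloop (such an element would display it as a direct sum with a one-element matroid on a complementary ground set), so the irreducible loopless matroids on $[n]$ are exactly the irreducible (equivalently, connected) matroids on $[n]$. Writing $c_n$ for their number, the goal becomes $c_n\ge 2^{(\sqrt{2/\pi}-o(1))\,2^n/n^{3/2}}$.

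The first substantive step is to show that almost every matroid is connected: $c_n=(1-o(1))m_n$. Every disconnected matroid on $[n]$ is $M_1\oplus M_2$ for some partition $[n]=E_1\cup E_2$ into nonempty parts and matroids $M_i$ on $E_i$, so the number of disconnected matroids is at most $\sum_{k=1}^{n-1}\binom{n}{k}m_k m_{n-k}$. The sequence $m_k$ is nondecreasing and grows doubly exponentially --- \cref{thm:matroids-upper} and \cref{thm:matroids-lower} pin $\log\log m_k$ to $k-\tfrac32\log k+O(1)$ --- so the product $m_k m_{n-k}$ is largest at the extremes $k=1,\,n-1$, where $\binom{n}{k}m_k m_{n-k}=n\,m_1 m_{n-1}=O(n\,m_{n-1})$; as $k$ moves toward $n/2$ the factor $m_{n-k}$ drops by a doubly-exponential amount at each step, which swamps the mild gain from $\binom{n}{k}m_k$. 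Hence the number of disconnected matroids is $\operatorname{poly}(n)\cdot m_{n-1}=o(m_n)$, and $c_n=(1-o(1))m_n\ge m_n/2$ for large $n$. (Alternatively one may cite directly the fact, from the Mayhew--Newman--Welsh--Whittle line of work whose loop/coloop piece is recorded in \cref{thm:matroids-loop-coloop}, that asymptotically almost all matroids are connected.)

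It remains to insert Knuth's lower bound, \cref{thm:matroids-lower}. Put $L_n=n-\tfrac32\log n+\tfrac12\log\tfrac2\pi$, so that $2^{L_n}=\sqrt{2/\pi}\cdot 2^n/n^{3/2}$. Then \cref{thm:matroids-lower} gives $\log m_n\ge 2^{L_n-o(1)}=(1-o(1))\sqrt{2/\pi}\cdot 2^n/n^{3/2}=(\sqrt{2/\pi}-o(1))\,2^n/n^{3/2}$, hence $m_n\ge 2^{(\sqrt{2/\pi}-o(1))\,2^n/n^{3/2}}$. Combining with $c_n\ge m_n/2$ and absorbing the factor $1/2$ into the $o(1)$ in the exponent yields $c_n\ge 2^{(\sqrt{2/\pi}-o(1))\,2^n/n^{3/2}}$, and therefore the same bound for the number of irreducible supermodular functions up to equivalence.

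The crux is the middle paragraph: making rigorous that disconnected matroids form a negligible fraction. Appealing to the known connectedness result makes this immediate. Deriving it from the displayed asymptotics alone is delicate, because the upper bound of \cref{thm:matroids-upper} carries an additive $+1$ in $\log\log m_n$, so a priori $m_{n-1}$ and $m_n$ agree only up to a $2^{o(1)}$ factor on the $\log$ scale; to get a clean margin one would instead exploit the structure of the extremal sparse paving matroids underlying \cref{thm:matroids-lower}, for which a disconnected example is forced into the very rigid form $M'\oplus U_{1,2}$ and is therefore easily counted and seen to be negligible among all sparse paving matroids. Either way, the remaining estimates are routine.
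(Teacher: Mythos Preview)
Your overall strategy coincides with the paper's: pass to matroids via the bijection, show that reducible (disconnected) matroids are $o(m_n)$, and then invoke Knuth's lower bound. The only substantive issue is the one you yourself flag in the last paragraph, and the paper resolves it by exactly the trick you are missing rather than by either of your two suggested workarounds.

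The paper does not try to bound the full sum $\sum_{k=1}^{n-1}\binom{n}{k}m_k m_{n-k}$, nor does it appeal to a separate ``almost all matroids are connected'' theorem or to a sparse-paving argument. Instead it splits off the $k=1$ (and $k=n-1$) terms: a matroid that decomposes with one part of size $1$ has a loop or a coloop, so those are $o(m_n)$ directly by \cref{thm:matroids-loop-coloop}. The remaining reducible matroids have both parts of size at least $2$, so their count is at most
\[
\sum_{t=2}^{\lfloor n/2\rfloor}\binom{n}{t}m_t m_{n-t}\;\le\;2^n\,m_{n-2}\,m_{\lfloor n/2\rfloor}.
\]
Now the largest factor is $m_{n-2}$ rather than $m_{n-1}$, and this buys exactly the margin you were lacking: $\log m_{n-2}$ is at most roughly $\tfrac14\cdot 2^{1+o(1)}$ times $2^{n}/n^{3/2}\sqrt{2/\pi}$ by \cref{thm:matroids-upper}, i.e.\ at most $(1/2+o(1))\log m_n$, so the product is $o(m_n)$ with room to spare. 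Thus the loop/coloop theorem is not an optional alternative but the device that makes the displayed asymptotics suffice.
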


\begin{proof}
    It suffices to lower bound the number of irreducible matroids. The number of reducible matroids on $[n]$ is at most the number of matroids on $[n]$ with a loop or coloop plus $$\sum_{t=2}^{\floor{n/2}}\binom{n}{t}m_tm_{n-t}\le 2^nm_{n-2}m_{\floor{n/2}}\le o(m_n).$$
    Here we used \cref{thm:matroids-upper} and \cref{thm:matroids-lower}. So, with \cref{thm:matroids-loop-coloop} we have that the number of reducible matroids is $o(m_n)$. Thus the number of irreducible matroids is at least $$(1-o(1))m_n\ge 2^{\left(\sqrt{2/\pi}-o(1)\right)2^n/n^{3/2}}.$$

\end{proof}

% Remark ?: The sequence $(m_n)$ determines the number of irreducible matroids by inclusion-exclusion or by exponential generating functions.

\section{Supermodularities on two layers}\label{sec:two-layers}

In this section, we analyze irreducible supermodular functions which are nearly modular, for some notion of ``nearly''. Let $\PP_{n,t}$ denote the set of close pairs $\{I,J\}$ with $\abs{I}=\abs{J}=t$, for $t\in[n-1]$.

Suppose that $f$ is a supermodular function and let $s=Tf$ as in \cref{sec:supermodular-analysis}. Suppose that $s$ is supported only on $\PP_{n,t}$ for some fixed $t\in[n-1]$. Then by iterating condition (3) of \cref{thm:path-sum}, we have that $s_{I,J}$ is constant over $\PP_{n,t}$. In particular, there is only equivalence class of irreducible supermodular functions with supermodularities supported on a single layer. As a generalized permutohedron, this corresponds to the hypersimplex $\Delta_{n,n-t}$. Let $\alpha_{n,t}=\max(0,\abs{I}-t)$ be the corresponding standard supermodular function.

A natural next step is to consider the case when $s$ is supported only on $\PP_{n,t}\cup\PP_{n,t+1}$ for some fixed $t\in[n-2]$. Here we allow the supermodularities to lie on two layers of the hypercube instead of just one. Let $\K$ denote the set of standard irreducible supermodular functions of this form.

The hypersimplices $\Delta_{n,n-t}$ and $\Delta_{n,n-t-1}$ correspond to the elements $\alpha_{n,t}, \alpha_{n,t+1} \in\K$ as seen above. Additionally, we can lift the hypersimplex $\Delta_{n-1,n-t+1}$ in $n$ different ways to obtain a supermodular function in $\K$. Specifically, for each $k\in [n]$, the corresponding function is $$\beta_{n,t,k}(I)=\max(0,\abs{I\cap([n]\setminus\{k\})}-t).$$

We will use the $n+2$ functions $\B_{n,t}=\{\alpha_{n,t},\alpha_{n,t+1},\beta_{n,t,1},\dots,\beta_{n,t,n}\}$ to describe $\K$.

\begin{theorem}\label{thm:2-layer-classification}
    The elements of $\K$ other than $\alpha_{n,t}$ and $\alpha_{n,t+1}$ are in bijection with subsets $S\subseteq[n]$ with $\abs{S}\in \{1,n-1\} $ or $$ \min(t+1,n-t)<\abs{S}<\max(t+1,n-t).$$
    
    The bijection is given by the map $$S\mapsto \sum_{k\in S}\beta_{n,t,k}-\max(0,\abs{S}-(t+1))\alpha_{n,t}-\max(0,\abs{S}-(n-t))\alpha_{n,t+1}.$$
\end{theorem}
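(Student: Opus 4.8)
The plan is to put the two‑layer cone into an explicit normal form and then read off its extreme rays.

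\smallskip
\noindent\emph{Normal form.}
Let $f$ be a standard supermodular function whose supermodularity vector $s=Tf$ is supported on $\PP_{n,t}\cup\PP_{n,t+1}$. Then $\partial_i\partial_jf(I)$ (the value of $s$ on the square face with intersection $I$) vanishes unless $\abs I\in\{t-1,t\}$, so $\partial_if$ is constant on the subsets of size $\le t-1$ and constant on those of size $\ge t+1$. The first fact, together with $\partial_if(\varnothing)=f(\{i\})-f(\varnothing)=0$, forces $f(I)=0$ whenever $\abs I\le t$. The second lets one telescope: writing $L_i:=\partial_if([n]\setminus\{i\})=f([n])-f([n]\setminus\{i\})$ (the weight of color $i$ of \cref{thm:path-sum}), for any $(t+2)$-set $V$ and $j,k\in V$ one gets $f(V\setminus\{k\})+L_k=f(V)=f(V\setminus\{j\})+L_j$, so $f(U)-\sum_{i\in U}L_i$ is invariant under transpositions and hence constant over all $(t+1)$-sets $U$; calling this constant $-c$, we obtain $f(I)=\sum_{i\in I}L_i-c$ for all $\abs I\ge t+1$. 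Conversely, such an $f$ is supermodular iff its only surviving supermodularity values, namely $\sum_{i\in U}L_i-c$ on the level-$t$ faces and $c-\sum_{i\in C}L_i$ on the level-$(t+1)$ faces, are all nonnegative, i.e.\ iff
$$\sigma_t(L)\ \le\ c\ \le\ \tau_{t+1}(L),$$
where $\sigma_t(L)$ is the sum of the $t$ largest $L_i$ and $\tau_{t+1}(L)$ the sum of the $t+1$ smallest (this already forces $L\ge 0$). Since the functions with $s_{I,J}=0$ off the two layers form a face of the supermodular cone, the linear bijection $f\mapsto(L,c)$ identifies $\K$ with the set of extreme rays of the cone $\mathcal D:=\{(L,c)\in\R_{\ge 0}^n\times\R:\sigma_t(L)\le c\le\tau_{t+1}(L)\}$ in $\R^{n+1}$.

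\smallskip
\noindent\emph{Locating the candidates.}
A direct computation gives $\alpha_{n,t}\leftrightarrow(\mathbf 1,t)$, $\alpha_{n,t+1}\leftrightarrow(\mathbf 1,t+1)$, and more generally the image of $S\subseteq[n]$ under the map in the statement corresponds to $(L^S,c^S)$, where $L^S_i=\gamma$ for $i\notin S$ and $L^S_i=\gamma-1$ for $i\in S$, with $\gamma:=\min(\abs S,t+1)-\max(0,\abs S-(n-t))$ and $c^S:=(t+1)\gamma-\min(\abs S,t+1)$. One checks $\gamma\ge 1$ and $c^S=\sigma_t(L^S)=\tau_{t+1}(L^S)$, so $(L^S,c^S)\in\mathcal D$; in particular this reproves that every $F_S$ is supermodular. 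For $\abs S\in\{0,n\}$ this vector is $0$, while for $1\le\abs S\le n-1$ it takes exactly the two values $\gamma>\gamma-1\ge 0$, and from the positions of the minimal coordinates one recovers $S$; there $L^S\ne\mathbf 1$, so $S\mapsto F_S$ is injective and never hits $\alpha_{n,t}$ or $\alpha_{n,t+1}$.

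\smallskip
\noindent\emph{Extreme rays of $\mathcal D$.}
A nonzero $(L,c)\in\mathcal D$ spans an extreme ray iff the inequalities of $\mathcal D$ tight at it have rank $n$; the tight ones are $L_i=0$ for the zero coordinates, $\sum_{i\in C}L_i=c$ for the $t$-sets achieving $\sigma_t(L)$, and $\sum_{i\in U}L_i=c$ for the $(t+1)$-sets achieving $\tau_{t+1}(L)$. If $\sigma_t(L)<c<\tau_{t+1}(L)$ only zero-coordinate equations are tight, which cannot have rank $n$; hence $c=\sigma_t(L)$ or $c=\tau_{t+1}(L)$. Two maximal $t$-sets (resp.\ two minimal $(t+1)$-sets) differing by a single transposition force an equality $L_i=L_j$, and chaining such transpositions one shows: an extreme ray with $c=\sigma_t(L)\ne\tau_{t+1}(L)$ has $L$ a multiple of $\mathbf 1$, hence equals $\alpha_{n,t}$ (symmetrically $\alpha_{n,t+1}$ in the other strict case), while if $c=\sigma_t(L)=\tau_{t+1}(L)$ then $L$ takes at most two distinct values, the equation $\sigma_t(L)=\tau_{t+1}(L)$ pins their ratio to that of two consecutive integers, and $(L,c)$ is proportional to $(L^S,c^S)$ with $S$ the set of minimal coordinates. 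Thus every extreme ray of $\mathcal D$ is $\alpha_{n,t}$, $\alpha_{n,t+1}$, or $F_S$ for a unique $S$ with $1\le\abs S\le n-1$; it remains to decide which of the $(L^S,c^S)$ actually have rank $n$.

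\smallskip
\noindent\emph{The arithmetic condition, and the main obstacle.}
Writing $s=\abs S$, the tight $t$-sets at $(L^S,c^S)$ are exactly those $C$ with $\abs{C\cap S}=\max(0,s-(n-t))$, and the tight $(t+1)$-sets are those $U$ with $\abs{U\cap S}=\min(s,t+1)$; the ranks they contribute are governed by whether these families are rich enough to force $L$ constant on each of $S$ and $[n]\setminus S$ (each then contributing one further independent equation among the two coordinate-values and $c$). One checks that constancy on $[n]\setminus S$ is forced iff $s=n-1$ or $s<\max(t+1,n-t)$, constancy on $S$ is forced iff $s=1$ or $s>\min(t+1,n-t)$, and that these two conditions together are also sufficient for rank $n$. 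Intersecting them yields exactly $\abs S\in\{1,n-1\}$ or $\min(t+1,n-t)<\abs S<\max(t+1,n-t)$, completing the proof. The main work is this last step together with the structural claim of the previous paragraph: ruling out extreme rays on which $L$ takes three or more distinct values, and carrying out the rank count for $(L^S,c^S)$ precisely enough to pin down the inequality on $\abs S$ — the boundary sizes $1$, $n-1$, $\min(t+1,n-t)$, $\max(t+1,n-t)$ are the delicate ones (and a few very small $n$ may have to be treated directly).
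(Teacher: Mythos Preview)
Your approach is correct and genuinely different from the paper's. You parametrize the two-layer cone directly by the color weights $(L_1,\dots,L_n)$ and the constant $c$, obtaining the explicit $(n+1)$-dimensional cone $\mathcal D=\{(L,c):\sigma_t(L)\le c\le\tau_{t+1}(L)\}$, and then determine its extreme rays by a rank analysis of the tight facets. The paper instead works in the overdetermined spanning set $\B_{n,t}=\{\alpha_{n,t},\alpha_{n,t+1},\beta_{n,t,1},\dots,\beta_{n,t,n}\}$, uses the single linear relation $\sum_k\beta_{n,t,k}=(n-t-1)\alpha_{n,t}+t\alpha_{n,t+1}$ to normalize so that the $\beta$-coefficients $x_k$ are nonnegative with at least one zero, observes that the $\alpha$-coefficients $y_1,y_2$ are then forced (as piecewise-linear functions of $\mathbf x$), and exploits that piecewise-linearity to peel off a summand supported on the set $S$ of maximal $x_k$, forcing $x_k\in\{0,1\}$. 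The case analysis on $\abs S$ at the end is essentially the same in both proofs.

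What each buys: your route is more self-contained and does not require guessing the distinguished family $\B_{n,t}$ in advance; the cone $\mathcal D$ is concrete enough that its extreme rays fall out of an order-statistics computation, and the parenthetical ``this already forces $L\ge 0$'' is a nice bonus (it follows from $\tau_{t+1}(L)-\sigma_t(L)\le L_{(n)}$). The paper's route keeps the hypersimplices $\beta_{n,t,k}$ visible throughout and makes the final bijection formula appear naturally rather than by a separate computation. Your sketch is honest about where the work lies (the ``at most two values'' structural claim and the rank count at $(L^S,c^S)$); both of these go through, though when you fill them in you should be careful that the redundant constraints $L_i\ge 0$ you included in $\mathcal D$ do not inflate the rank in borderline cases---they don't, because $L^S_i=\gamma-1\ge 0$ with equality only when $\abs S\in\{1,n-1\}$, which you already single out.
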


\begin{proof}
First, notice the identity \begin{equation}\label{eqn:2-layer-identity}
    \sum_{k\in [n]}\beta_{n,t,k}=(n-t-1)\alpha_{n,t}+t\alpha_{n,t+1}.
\end{equation}
Additionally, this is the only linear dependence in $\B_{n,t}$. Thus $\dim\Span\K\ge n+1.$

In fact, we claim that this is an equality. By \cref{thm:path-sum}, for any $f\in \K$, we can solve for $s=Tf$ from the color path sums $m_1,\dots,m_n$ and any fixed supermodularity value on $\PP_{n,t}$, since each path sum has only two nontrivial terms. Since $f$ is standard, we can solve for $f$ from $s$. Thus $\dim\Span\K=n+1.$

Now, let $f\in\K$ with $f\ne \alpha_{n,t},\alpha_{n,t+1}$ and let $s=Tf$. By \cref{eqn:2-layer-identity}, we can write $f$ as a linear combination of $\B_{n,t}$ such that each $\beta_{n,t,k}$ has a nonnegative coefficient $x_k$ and at least one of these coefficients is $0$. Now, note that there must exist a close pair $\{I,J\}\in\PP_{n,t}$ with $s_{I,J}=0$. Otherwise, we could subtract a multiple of $\alpha_{n,t}$ from $f$ while leaving a supermodular function. Thus the coefficient of $\alpha_{n,t}$ is determined by the coefficients $x_k$; it is the minimum supermodularity value of $\sum_{k\in [n]}x_k\beta_{n,t,k}$ on $\PP_{n,t}$. Note that $\beta_{n,t,k}$ is supermodular on a close pair $\{I,J\}\in\PP_{n,t}$ if and only if $k\not\in I\cup J$. So we obtain that the coefficient of $\alpha_{n,t}$ is $$y_1=-\min_{\{I,J\}\in \PP_{n,t}}\sum_{k\not\in I\cup J}x_k=-\min_{\abs{K}=t+1}\sum_{k\not\in K}x_k.$$
Similarly, we obtain that the coefficient of $\alpha_{n,t+1}$ is $$y_2=-\min_{\{I,J\}\in \PP_{n,t+1}}\sum_{k\in I\cap J}x_k=-\min_{\abs{K}=t}\sum_{k\in K}x_k.$$
Next, we show that the coefficients $x_k$ only have one distinct nonzero value. Consider $y_1$ and $y_2$ as functions of $\x=\{x_k\}$. Note that both are piecewise linear. In particular, given $\x$ and $\x'$ with coordinates sharing a (weak) relative order, we have that $y_1(\x+\x')=y_1(\x)+y_1(\x')$. Now, let $S\subseteq[n]$ consist of all $k$ such that $x_k$ is maximal. Let $x_k'=1$ for $k\in S$ and $x_k'=0$ for $k\not\in S$. Then have that $$g=\left(\sum_{k\in S}\beta_{n,t,k}+y_1(\x')\alpha_{n,t}+y_2(\x')\alpha_{n,t+1}\right)$$ is supermodular, and for sufficiently small $\eps>0$, $f-\eps g$ is supermodular. Thus $f$ must be a multiple of $g$. In particular, we can assume $x_k=x_k'\in\{0,1\}$ for each $k$. Then $y_1=-\max(0,\abs{S}-(t+1))$ and $y_2=-\max(0,\abs{S}-(n-t))$. Thus we have that each $f\in\K$ with $f\ne \alpha_{n,t},\alpha_{n,t+1}$ corresponds to an $S\subseteq[n]$ as claimed. It suffices to check with choices of $S$ give an irreducible $f$. By symmetry, we only need to consider $\abs{S}$.

First, we consider the case $\abs{S}\le \min(t+1,n-t)$. If $\abs{S}=0$, then $f=0$, so it is not irreducible. If $\abs{S}=1$, then $f=\beta_{n,t,k}$ for some $k$, so it is irreducible. Otherwise, note that $y_1=y_2=0$, so we already have a decomposition of $f$. So, $f$ is not irreducible.

Next, we consider the case $\abs{S}\ge \max(t+1,n-t)$. If $\abs{S}=n$, then $f=0$ by \cref{eqn:2-layer-identity}, so it is not irreducible. Now assume $\abs{S}<n$. By the above calculations for $y_1$ and $y_2$, we have that for each $\ell\in[n]$, $$\gamma_{n,t,\ell}=\sum_{k\in[n]\setminus\{\ell\}}\beta_{n,t,k}-(n-t-2)\alpha_{n,t}-(t-1)\alpha_{n,t+1}$$ is supermodular. We claim that $f=\sum_{\ell\in[n]\setminus S}\gamma_\ell.$ This follows by applying \cref{eqn:2-layer-identity} and comparing coefficients. So, if $\abs{S}<n-1$, then $f$ is not irreducible.

It remains to check that $f$ is irreducible if $\abs{S}=n-1$ or $$ \min(t+1,n-t)<\abs{S}<\max(t+1,n-t).$$
Now, since we know all possible elements of $\K$, we just need to show that none of the claimed elements can be decomposed using the other claimed elements. It suffices to check that for each ordered pair of claimed elements, there exists a close pair on which the first is strictly supermodular while the second is modular. This is a simple calculation, and we omit the details.

\end{proof}

% \begin{lemma}
%     Linalg step: fix the k basis vectors to write everything as a sum
% \end{lemma}

% So, each 2-layer supermodular function corresponds to a multiset of $[n]$.

% \begin{lemma}
%     Each irreducible one corresponds to a subset (not just a multiset).
% \end{lemma}

% \begin{lemma}
%     A subset $M$ can only work if [insert bounds on $\abs{M}$] 
% \end{lemma}

% \begin{lemma}
%     If $M$ satisfies the bounds, it is irreducible.
% \end{lemma}

% \todo[inline]{further questions: what is the correct asymptotic? is it possible to view supermodular functions as generalized matroids somehow?}

% \section{Further Directions}\label{sec:further}

\section{Acknowledgements}\label{sec:acknowledgements}

We would like to thank Ashwin Sah and Mehtaab Sawhney for helpful comments on random matrix theory. This research was conducted at SPUR at MIT in Summer 2022. We would like to thank the SPUR Directors David Jerison and Ankur Moitra for organizing SPUR and for helpful conversations throughout the project.

\bibliographystyle{plain}
\bibliography{refs}

\end{document}